\renewcommand{\ss}{\scriptscriptstyle} 
\renewcommand{\d}{\displaystyle} 
\renewcommand{\ni}{\noindent} 
\renewcommand{\epsilon}{\varepsilon} 
\renewcommand{\le}{\leqslant}
\renewcommand{\leq}{\leqslant}
\renewcommand{\geq}{\geqslant}
\newcommand{\V}{{\boldsymbol{V}}} 
\renewcommand{\L}{{\mathcal{L}}} 
\newcommand{\M}{{\mathcal{M}}} 
\newcommand{\1}{{\boldsymbol{1}}} 
\newcommand{\F}{{\mathcal{F}}} 
\newcommand{\Q}{{\mathcal{Q}}} 
\renewcommand{\S}{{T}} 
\renewcommand{\phi}{{\varphi}}
\newcommand{\cZ}{{\mathcal{Z}}}
\newcommand{\N}{{\mathbb{N}}} 
\newcommand{\C}{{\mathbb{C}}} 
\newcommand{\scirc}{{\scriptstyle\circ}} 
\newcommand{\probability}{{\mathbb P}}
\newcommand{\expectation}{{\mathbb E}}
 \font\bigmath=cmmi10 scaled \magstep2 
	\newcommand{\bigchi}{\hbox{\bigmath \char31}} 
	\newcommand{\ldelta}[1]{{\L}_\Delta^{#1}} 
	\newcommand{\var}{{\mathrm{var}}}
\newcommand{\Exp}{{\expectation}}
\newcommand{\pdelta}{\!{\scriptscriptstyle \Delta}}
\newtheorem{theorem}{Theorem} 
\newtheorem{lemma}[theorem]{Lemma} 
\newtheorem{proposition}[theorem]{Proposition}
\newtheorem{alemma}{Lemma}[section]
\title[Poisson processes for subshifts of finite type]
{Poisson processes for subsystems of finite type in symbolic dynamics}
\author{Jean-Ren\'e Chazottes, Zaqueu Coelho and Pierre Collet} %
\address{Centre de Physique Th\'eorique\\
CNRS UMR 7644\\
Ecole Polytechnique\\
F-91128 Palaiseau Cedex\\
FRANCE} %
\email{jean-rene.chazottes@cpht.polytechnique.fr} %
\address{Department of Mathematics \\ University of York\\ Heslington \\
York YO10 5DD\\ UK} %
\email{zc3@york.ac.uk} %
\address{Centre de Physique Th\'eorique\\
CNRS UMR 7644\\
Ecole Polytechnique\\
F-91128 Palaiseau Cedex\\
FRANCE} %
\email{pierre.collet@cpht.polytechnique.fr} %
\date{\today
} %
\subjclass[2000]{Primary 37D35; Secondary 60F05,60G55} %
\keywords{hitting times, limit laws, Gibbs states, Poisson point 
process, Pianigiani-Yorke measure}
\begin{document}

\begin{abstract}
Let $\Delta\subsetneq\V$ be a proper subset of the vertices $\V$ of 
the defining graph of an irreducible and aperiodic shift of finite type 
$(\Sigma_{A}^{+},\S)$.  Let $\Sigma_{\Delta}$ be the subshift of 
allowable paths in the graph of $\Sigma_{A}^{+}$ which only passes 
through the vertices of $\Delta$.  For a random point $x$ chosen with 
respect to an equilibrium state $\mu$ of a H\"older potential $\phi$ 
on $\Sigma_{A}^{+}$, let $\tau_{n}$ be the point process defined as 
the sum of Dirac point masses at the times $k>0$, suitably rescaled,
for which the first $n$-symbols of $\S^k x$ belong to $\Delta$. 
We prove that this point process converges in law to a marked Poisson 
point process of constant parameter measure.  The scale is related to 
the pressure of the restriction of $\phi$ to $\Sigma_{\Delta}$ and the 
parameters of the limit law are explicitly computed.
\end{abstract}

\maketitle

%
%

\section*{Introduction}
\ni The study of  limit laws for the (rescaled) random times of
occurrence of asymptotically rare events has motivated the
consideration of dynamically defined hitting time point processes (see
definition in Section~\ref{sec:aper} and the expository
notes~\cite{Coe}).  Special attention has been given to the case where
one considers, for an ergodic dynamical system on a compact metric
space, the first hitting time of shrinking neighbourhoods of a generic point.
In fact,  given any aperiodic ergodic dynamical system, 
one can get any limit law within a large class of laws \cite{lacroix}
by using a suitable family of shrinking neighbourhoods. 
In contrast with this abstract result, when one considers cylinder
sets about a generic point of a system mixing ``sufficiently well"
its partition, one expects and gets an exponential limit law for the first hitting time,
and a Poisson law for the hitting time process; see for instance
the papers~\cite{abadi,CG1,CG2,DGS,denkerkan,dolgo,Hay, HV,Hir1,Hir2,HSV,Pit}.
Here, as in~\cite{CC}, we consider another case where the intersection
of the shrinking neighbourhoods contains a non-trivial invariant set
and we show that a marked Poisson point process appears as the
asymptotic limit law.

This paper is motivated by the main result in~\cite{CC}, but phrased 
in the context of symbolic dynamical systems.  The problem reads as 
follows: given two points $x$ and $y$ in an aperiodic shift of finite 
type (randomly chosen according to the equilibrium state of some H\"older potential) 
consider the times when their orbits under the shift get 
$\epsilon$-close with respect to the usual distance
(take the distance between two points as $\sum_i |x_i-y_i|\, \rho^i$ for some fixed $0<\rho<1$, say). 
This defines a point process depending on $\epsilon$ and the problem is to prove 
whether this point process rescaled by $\epsilon^{-1}$ converges in 
law when $\epsilon$ tends to zero.  Tackling this problem with the 
technique developed in~\cite{CC} requires some additional ideas from 
spectral properties of a perturbed transfer operator studied 
in~\cite{CMS}, and we realised that the above problem could be 
obtained as an application of a much more general result after the 
study of random times of asymptotic approach to a subsystem of finite 
type, and this is the main subject of this paper.

Let $\Sigma_{\Delta}$ be a proper subshift of finite type of a 
one-sided irreducible and aperiodic shift of finite type $\Sigma_{A}^{+}$ and consider 
a generic point $x\in\Sigma_{A}^{+}$ with respect to the equilibrium 
state $\mu$ of some H\"older potential $\phi$ defined on 
$\Sigma_{A}^{+}$.  Define a point process on $[0,\infty)$ by summing 
Dirac masses at the times $t>0$ for which the orbit of $x$ under the 
shift on $\Sigma_{A}^{+}$ is $\epsilon$-close to $\Sigma_{\Delta}$.  
Using a higher-block representation of $\Sigma_{A}^{+}$ (see for 
instance~\cite{LM}), we may assume the subshift $\Sigma_{\Delta}$ is 
constructed by choosing a proper subset of vertices 
$\Delta\subsetneq\V$ of the defining graph of $\Sigma_{A}^{+}$, and 
defining $\Sigma_{\Delta}$ as the subshift of allowable paths in the 
graph of $\Sigma_{A}^{+}$ which only passes through vertices of 
$\Delta$.  Let $\tau_{n}$ be the above point process of asymptotic 
approach to $\Sigma_{\Delta}$, redefined as the sum of Dirac masses at 
the times $k\geq 1$, for which the first $n$-symbols of $\S^k x$ belong to 
$\Delta$ (so we have $\epsilon\sim\rho^{n}$).  Let $P(\phi)$ denote 
the pressure of $\phi$, and $P_{\pdelta}$ the pressure of the 
restriction of $\phi$ to the subsystem $\Sigma_{\Delta}$ (hence 
necessarily $P_{*}=P_{\pdelta}-P(\phi)<0$).

We prove in this paper that if $\Sigma_{\Delta}$ is an irreducible and aperiodic
subshift of finite type in its alphabet $\Delta$, then $\tau_{n}$ when 
rescaled by $e^{-nP_{*}}$ converges in law when $n\to\infty$.  The 
limit law is a marked Poisson point process of constant parameter 
measure $\lambda\pi$, where the parameters are given by
\[
	\lambda \;=\; (1-e^{P_{*}})\, \int h_{\pdelta}\, d\mu 
	\qquad\mbox{and}\qquad \pi_{j} \;=\; 
	(1-e^{P_{*}})\,e^{(j-1)P_{*}} \:,
\]
for $j\geq{1}$.  Here $h_{\pdelta}$ is the density of the 
Pianigiani-Yorke measure associated to the triple 
$(\Sigma_{A}^{+},\Sigma_{\Delta},\phi)$, see Sections~\ref{sec:sft} 
and~\ref{sec:symbl}, and~\cite{CMS}.  This result is mentioned 
in~\cite{Coe} with a sketch of the proofs and refers to the present 
paper for the complete proofs.  The problem of studying the asymptotic 
approach of two (or more) shift orbits, mentioned at the beginning, 
can then be obtained as a consequence of the above result, see the 
application after Theorems~\ref{thm:mpsymbv} and~\ref{thm:mpsymbe} 
below.

We explain in Section~\ref{sec:aper} the main ingredients of proving 
that the limit of dynamically defined hitting time point process is a 
marked Poisson point process by describing two hypotheses 
\mbox{(H.1-2)} which guarantee convergence in law.  We prove in later 
sections that these hypotheses are satisfied in our setting.  

\section{Laplace Transforms Technique}
\label{sec:aper}
We describe the details of the use of Laplace Transforms to study 
convergence in law of dynamically defined hitting times point 
processes.  We give a general exposition trying to extract the 
essential elements of the technique which we believe could have 
independent interest.  This follows closely the computations done 
in~\cite{CC} and we repeat them in full here for the sake of clarity 
and completeness and in order to make the present paper 
self-contained.

Let $(\Omega,\mu,\S)$ be an ergodic dynamical system on a standard 
Borel probability space $(\Omega,\mu)$.  Let $\Delta_{n}$ be a 
sequence of Borel sets of $\mu$-positive measure, such that 
$\mu(\Delta_{n})\to{0}$ (i.e.~a sequence of so-called 
\emph{asymptotically rare events}).  We suppose that a sequence of 
scales $c_{n}>0$ with $c_{n}\to{0}$ is chosen, to be used for 
rescaling the random times of occurrence of $\Delta_{n}$.  Define the 
point process $\tau_n$ of \emph{hitting times} in $\Delta_n$ rescaled 
by $c_{n}^{-1}$ as the map $\tau_n\colon 
{\Omega}\to{\M}_\sigma[0,\infty)$ given by
\begin{equation}
	\tau_n({\omega})\;=\;\sum_{k>0}\, \bigchi_{\Delta_{n}}({\S}^k 
	{\omega})\,\delta_{k c_{n}}\;,
	\label{eq:tau}
\end{equation}
where $\delta_{t}$ denotes Dirac measure at the point $t>0$, and 
${\M}_\sigma[0,\infty)$ denotes the Borel $\sigma$-finite measures on 
$[0,\infty)$.  The process of \emph{entrances} to $\Delta_n$ is given 
by
\[
	\tau_n^e({\omega})\;=\; \sum_{k>0}\, \bigchi_{\Delta_{n}}({\S}^k 
	{\omega})\, \bigchi_{\Delta_{n}^{c}}({\S}^{k-1} {\omega}) 
	\,\delta_{k c_n}\;,
\]
where $\Delta_{n}^{c}$ denotes the complement of $\Delta_n$.  Let 
$g\colon [0,\infty)\to\C$ be a continuous function with compact 
support.  Integrating $g$ by the point process $\tau_n$ gives rise to 
a random variable
\begin{equation}
	X_n(g)({\omega})\;=\;\sum_{k>0} \bigchi_{\Delta_{n}}({\S}^k 
	{\omega})\,g(k\, c_n)\;,
	\label{eq:xgdef}
\end{equation}
which is defined on the probability space $(\Omega,\mu)$.  
From~\cite{Nev} it is known that convergence in law of $X_n(g)$, for 
every $g$, is equivalent to convergence in law of the point 
process~$\tau_n$.  Throughout this paper we denote the expectation 
with respect to $\mu$ by
\[
	\Exp( X_n(g) )\;=\;\int X_n(g)({\omega})\,d\mu({\omega})\;.
\]

In this Section our aim is to show that under suitable hypothesis on 
$\Delta_{n}$ and $c_{n}$, the sequence of point processes $\tau_{n}$ 
converges in law to a \emph{marked Poisson point process} of constant 
parameter measure $\lambda\pi$.  Recall that such a process is defined 
by a map $\tau\colon(\Omega,\probability)\to{\M}_\sigma[0,\infty)$ 
given by
\[
	\tau(\omega)\;=\;\sum_{k>0}\, L_{k}(\omega)\, 
	\delta_{X_{k}(\omega)}\;,
\]
where for all $k,k'$, $D_{k}=X_{k}-X_{k-1}$ and $L_{k'}$ are 
independent random variables, $D_{k}$ are exponentially distributed 
with parameter $\lambda>0$ (here $X_{0}\equiv 0$) and $L_{k'}$ is a 
positive integer valued random variable with distribution 
$\pi=\{\pi_{j}\}_{j>0}$, i.e.~$\probability(L_{k'}=j)=\pi_{j}$.  (In 
the special case of $\pi_{1}=1$, $\tau$ is the \emph{Poisson point 
process} of parameter $\lambda>0$.)  Integrating $g$ with respect to 
$\tau$ we obtain a random variable $X(g)$ whose Laplace transform is 
given by
\begin{equation}
	\Exp( e^{z X(g)}) \;=\; \exp \Big\{\lambda 
	\sum_{j=1}^{\infty} \pi_j \int_{0}^{\infty} 
	(e^{z\,j\, g(y)}-1) \: 
	dy \Big\} \;.
	\label{eq:lapo}
\end{equation}

Regarding the random variable $X_{n}(g)$ of (\ref{eq:xgdef}), its 
Laplace transform admits the factorial moments expansion
\[
	\psi_{n}(z) \;=\; \Exp(e^{z X_{n}(g)}) \;=\; \sum_{k\geq 0}\, 
	\dfrac{\Exp(X_{n}(g)^{k})}{k!}\,z^{k}\:,
\]
for all $z\in\C$.  By Proposition 8.49 of Breiman~\cite{Bre} we know 
that if 
\[
\nu_{k}\;=\;\lim_{n\to\infty}\,\Exp(X_{n}(g)^{k})
\] 
exists and satisfies
\[
	\limsup_{k\to\infty}\, \dfrac{|\nu_{k}|^{1/k}}{k} \:<\: \infty\:,
\]
then there exists a random variable $X(g)$ with Laplace transform 
given by $\psi(z)=\sum_{k\geq 0}\,\nu_{k}\, z^{k}/k!$ such that 
$X_{n}(g)$ converges in law to~$X(g)$.  Hence our strategy is 
to show that the Laplace transform of~$X(g)$ coincides 
with~(\ref{eq:lapo}) in some disc around the origin of~$\C$ and 
identify the constants $\lambda$ and $\pi_{j}$.

Let $\ell(n)$ be an arbitrary sequence of positive real numbers such 
that $\ell(n)\to\infty$ as $n$ diverges.  Consider the next conditions 
on $\Delta_{n}$ and $c_{n}$.  %
\noindent {\bf H.1} (Mean Intermediate Intersections Property)
The following limit exists
\[
	C_m\;=\;\lim_{n\to\infty}\; c_{n}^{-1} 
	\sum_{\stackrel{\scriptstyle 
	{0=q_0<q_1<\cdots<q_{m-1}}}{q_s-q_{s-1}\leq \ell(n)/m}} 
	\Exp\Big(\prod_{s=0}^{m-1}\bigchi_{\Delta_n}\scirc {\S}^{q_s} 
	\Big)\;,
\]
for every fixed $m>0$, and $C_{1}>0$.  Moreover, there exist 
$\widetilde{K},\theta>0$ such that \mbox{$C_{m}\leq 
\widetilde{K}\,\theta^{m}$}.
\bigskip

\noindent {\bf H.2} (Relativised Decay of Correlations)
There exist $K_{m}>0$ and $0<\gamma<1$ such that for every 
$0=j_{0}<j_{1}<\cdots<j_{m}$ satisfying $j_{s}-j_{s-1}\leq 
\ell(n)/m$, we have for sufficiently large $n$,
\[
	\Big| \Exp\big( \prod_{s=0}^{m} 
	\bigchi_{\Delta_n}\scirc{\S}^{j_{s}} \cdot\bigchi_{B}\scirc 
	{\S}^{r+j_{m}} \big) - \Exp\big( \prod_{s=0}^{m} 
	\bigchi_{\Delta_n}\scirc{\S}^{j_{s}} \big) \,\mu(B) \Big|\;\leq\; 
	K_{m}\,\gamma^{r+j_{m}}\,\mu(B)\;,
\]
for every $r>0$, and for every Borel set 
$B\subseteq{\Omega}$.

Note that if~(H.1) is satisfied then 
$C_{1}=\lim_{n\to\infty}\,c_{n}^{-1}\mu(\Delta_{n})$ is assumed to 
exist.  Note also that the limit in~(H.1) does not depend on the 
choice of~$\ell(\cdot)$.  Hypothesis~(H.2) is a type of $\phi$-mixing 
property, combined with the fact that $\Delta_{n}$ has small measure, 
this explains the introduction of the power $j_{m}$ on the right-hand 
side.  

These properties give the following general result.  %
\begin{theorem}
If $(\Delta_{n},c_{n})$ satisfy (H.1-2) then, for any continuous 
{non-negative} function $g$ with compact support on $[0,\infty)$, the 
limit
\[
	\nu_k\;=\; \lim_{n\to\infty}\,\Exp( X_{n}(g)^{k})
\]
exists for every $k>0$, and it is the $k$-th derivative at the origin 
of the complex function
\[
	{F}_g(z)\;=\; \exp\Big\{{\sum_{m=1}^{\infty} C_{m} 
	\int_{0}^{\infty} \big(e^{z\,g(t)}-1\big)^{m}dt}\Big\}\;,
\]
which is well-defined and analytic on a disc around the origin.
\label{prop:fact}
\end{theorem}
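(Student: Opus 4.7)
My plan is to compute $\Exp(X_n(g)^k)$ by expanding the $k$-fold power, decomposing the resulting sum according to how the time indices cluster on the scale $\ell(n)/k$, evaluating each cluster via (H.1), and using (H.2) to treat well-separated clusters as asymptotically independent. Starting from
\[
\Exp(X_n(g)^k) \;=\; \sum_{q_1,\ldots,q_k\ge 1} \Exp\Big[\prod_{i=1}^k \bigchi_{\Delta_n}\scirc \S^{q_i}\Big]\,\prod_{i=1}^k g(q_i c_n),
\]
and using $\bigchi_{\Delta_n}^a=\bigchi_{\Delta_n}$, each $k$-tuple collapses to a strictly increasing tuple $p_1<\cdots<p_j$ with multiplicities $a_1,\ldots,a_j\ge 1$ summing to $k$, carrying the multinomial weight $k!/(a_1!\cdots a_j!)$.

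I then introduce the cluster decomposition: partition $\{1,\ldots,j\}$ into maximal consecutive blocks $B_1,\ldots,B_r$ with $p_{s+1}-p_s\le\ell(n)/k$ inside a block and $>\ell(n)/k$ between successive blocks. Since the constants $K_m$ and $\gamma$ from (H.2) are fixed for $m\le k$, iterating (H.2) across the $r-1$ inter-block gaps factors the expectation as $\prod_\ell \Exp\bigl[\prod_{i\in B_\ell}\bigchi_{\Delta_n}\scirc\S^{p_i}\bigr]$ up to a total error of order $k\,\gamma^{\ell(n)/k}\to 0$. By $\S$-invariance, each block factor depends only on internal relative positions; since $\ell(n)/k\le\ell(n)/m_\ell$, hypothesis (H.1) applies block-by-block, and the intra-block sum equals $c_n\,C_{m_\ell}+o(c_n)$.

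Choosing $\ell$ so that $\ell(n)\to\infty$ and $\ell(n)c_n\to 0$ (a replacement consistent with the $\ell$-independence of (H.1), and permissible under (H.2) since shrinking $\ell$ only strengthens its gap constraint), all $p_i c_n$ in one block share the same limit, so the $g$-weights collapse to $g(p^{(\ell)}c_n)^{M_\ell}$ with $M_\ell=\sum_{i\in B_\ell}a_i$. Riemann summation then yields $c_n\sum_{p^{(\ell)}}g(p^{(\ell)}c_n)^{M_\ell}\to\int_0^\infty g(t)^{M_\ell}\,dt$, and the inter-block separation $\ell(n)/k$ collapses on the rescaled axis to leave an ordered integral over $0<t_1<\cdots<t_r$. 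Summing over $r$, block sizes $m_1+\cdots+m_r=j$, and multiplicity sequences $(a_1,\ldots,a_j)$, and using $(e^{zg(t)}-1)^m=\sum_{a_1,\ldots,a_m\ge 1}\prod_s \frac{(zg(t))^{a_s}}{a_s!}$ together with the exponential series, the limit reassembles as $F_g^{(k)}(0)$.

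For analyticity, since $g$ has compact support, $|e^{zg(t)}-1|\le|z|\|g\|_\infty e^{|z|\|g\|_\infty}$ on $\textrm{supp}(g)$, so $|I_m(z)|\le C(|z|\|g\|_\infty e^{|z|\|g\|_\infty})^m$; combined with $C_m\le\widetilde{K}\theta^m$, the exponent of $F_g$ converges absolutely on a disc around $0$, whence $F_g$ is analytic there and Cauchy's inequalities give $\limsup|\nu_k|^{1/k}/k<\infty$. The principal obstacle is uniform control of the errors in the cluster expansion: iterating (H.2) across $O(k)$ separations, handling boundary configurations near the threshold $\ell(n)/k$ and near the boundary of $\textrm{supp}(g)$, and verifying that the combinatorial reassembly matches the Taylor expansion of $F_g$ coefficient by coefficient.
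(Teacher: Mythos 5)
Your proposal follows the same strategy as the paper's proof in Appendix~\ref{app:fact}: expand $\Exp(X_n(g)^k)$ via multinomial coefficients over ordered tuples, decompose into clusters separated by gaps exceeding $\ell(n)/k$, iterate (H.2) across inter-block separations with geometrically small remainders, apply (H.1) block-by-block after translating to internal cluster coordinates, and pass to the iterated integral by Riemann summation before reassembling the combinatorics into $F_g^{(k)}(0)$. Your explicit stipulation that $\ell(n)\,c_n\to 0$ — needed so the $g$-weights inside a block collapse to a single value $g(y_i)^{M_i}$ in the limit — is a worthwhile clarification of a point the paper leaves implicit when it invokes ``elementary properties of the Riemann integral,'' and your analyticity bound via $|e^{zg(t)}-1|\leq |z|\,\|g\|_\infty e^{|z|\|g\|_\infty}$ together with $C_m\leq\widetilde K\theta^m$ is the correct justification for the radius of convergence that the paper delegates to Breiman's Proposition 8.49.
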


For the proof of the above result see Appendix~\ref{app:fact}.  In 
order to identify the limit law of $\tau_{n}$ as a marked Poisson 
point process, one needs to formally solve the equation
\begin{equation}
	{\sum_{m=1}^{\infty} C_{m} \int_{0}^{\infty} \big(e^{z\, 
	g(t)}-1\big)^{m}dt}\;=\; \lambda \sum_{j=1}^{\infty} \pi_j 
	\int_{0}^{\infty} (e^{z\,j\, g(y)}-1) \: dy \;
	\label{eq:equal}
\end{equation}
in the constants $\lambda$ and $\pi_{j}$ (where $\sum\,\pi_{j}=1$), 
and ensure that the series on the right-hand side is absolutely 
convergent for $z$ in a neighbourhood of the origin.  

In Appendix~\ref{app:ident} we do this computation explicitly for the 
special case of $C_{m}=c\,\theta^{m-1}$, for some $c>0$ (this is the 
only case needed in this paper and it is referred to in 
Section~\ref{sec:symbl}), and we obtain in this case
\begin{equation}
	\lambda \;=\; \dfrac{c}{1+\theta} \qquad\mbox{and}\qquad \pi_{j} 
	\;=\; \dfrac{{\theta}^{j-1}}{(1+\theta)^{j}} \;.
	\label{eq:parm}
\end{equation}

As already pointed out in~\cite{CC} and~\cite{Coe}, using some facts 
about convergence of point processes (cf.~\cite{DV}), 
Theorem~\ref{prop:fact} together with the existence of the 
constants $\lambda$ and $\pi_{j}$ solving~(\ref{eq:equal}) prove the 
next two results.  %
\begin{theorem}
Suppose $(\Delta_{n},c_{n})$ satisfy (H.1-2).  Then, there exists 
$\lambda>0$ and a probability measure on the positive integers $\pi$ 
such that the process of hitting times $\tau_n$ in $\Delta_n$ rescaled 
by $c_{n}^{-1}$ converges in law to a marked Poisson point process 
with constant parameter measure $\lambda\,\pi$, where $(\lambda,\pi)$ 
is the solution of~(\ref{eq:equal}).
\label{thm:mpgen}
\end{theorem}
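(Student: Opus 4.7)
\emph{Overall plan.} I would identify the limit law directly via Laplace transforms. Theorem~\ref{prop:fact} already yields that for every continuous non-negative compactly supported $g$ the moments $\nu_k = \Exp(X_n(g)^k)$ converge and their generating function $F_g(z)$ is analytic on a disc around $0$. Analyticity in particular gives the Breiman growth condition $\limsup_k |\nu_k|^{1/k}/k < \infty$, so Proposition~8.49 of~\cite{Bre} furnishes a non-negative random variable $X(g)$ with Laplace transform $F_g$ such that $X_n(g)\to X(g)$ in law. The task reduces to rewriting $F_g$ in the marked Poisson form~(\ref{eq:lapo}) and then invoking the Neveu criterion~\cite{Nev} to promote convergence of the linear functionals $X_n(g)$ to convergence in law of the point processes $\tau_n$.

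\emph{Solving the matching equation~(\ref{eq:equal}).} I would view both sides as integrals in $t$ of formal power series in $v := e^{zg(t)}-1$ and match integrands. Using $(1+v)^j - 1 = \sum_{i\ge 1}\binom{j}{i} v^i$ on the right-hand side and equating coefficients of $v^m$ leads to the triangular system
\[
C_m \;=\; \lambda\sum_{j\ge m}\binom{j}{m}\pi_j \qquad (m\ge 1).
\]
Setting $f(v)=\sum_{m\ge 1} C_m v^m$ and $H(u)=\lambda\sum_{j\ge 1}\pi_j u^j$, this is equivalent to $H(u) = f(u-1) + \lambda$. The bound $C_m\le\widetilde K\theta^m$ from (H.1) makes $f$ analytic on $|v|<1/\theta$, so (in the relevant regime $\theta<1$) evaluating at $u=0$ fixes $\lambda = -f(-1)=\sum_{m\ge 1}(-1)^{m-1}C_m$ and reading off Taylor coefficients at $u=0$ gives each $\lambda\pi_j$ explicitly; evaluating at $u=1$ then gives $\sum_j \lambda\pi_j = f(0)+\lambda = \lambda$, so $\sum_j \pi_j = 1$ as required. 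The explicit case $C_m = c\theta^{m-1}$ carried out in Appendix~\ref{app:ident} is a consistency check: it yields the closed forms~(\ref{eq:parm}).

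\emph{Positivity and conclusion.} The delicate step, which I expect to be the main obstacle, is to verify $\pi_j\ge 0$ for all $j\ge 1$, since the inversion formula presents $\lambda\pi_j$ as an alternating series in the $C_m$. For the geometric sequences $C_m=c\theta^{m-1}$ actually arising in the applications this is immediate from~(\ref{eq:parm}); in general I would argue indirectly by noting that each $\tau_n$ is a counting measure, so the limiting $X(g)$ is non-negative and $z\mapsto F_g(z)$ is the Laplace transform of a non-negative random variable; together with the exponential form of $F_g$ (read off from Theorem~\ref{prop:fact}) this forces the $\pi_j$ arising from the inversion to lie in $[0,1]$. Once $(\lambda,\pi)$ is identified as the parameters of a bona fide marked Poisson point process $\tau$, its Laplace functional agrees with $F_g$ on the disc of analyticity, hence $X_n(g)\Rightarrow \int g\,d\tau$ for every admissible $g$. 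By the Neveu equivalence~\cite{Nev} (equivalently, the point-process convergence criterion in~\cite{DV}) this gives $\tau_n \Rightarrow \tau$ in law as elements of $\M_\sigma[0,\infty)$, completing the proof.
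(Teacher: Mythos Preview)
Your overall architecture is exactly the paper's: invoke Theorem~\ref{prop:fact} to get convergence of the moments and the analytic form of $F_g$, apply Breiman's Proposition~8.49 to obtain $X_n(g)\Rightarrow X(g)$, rewrite $F_g$ in the marked--Poisson form~(\ref{eq:lapo}) by solving~(\ref{eq:equal}), and then upgrade to convergence of the point processes via the Neveu/Daley--Vere-Jones criterion. The paper's own ``proof'' of Theorem~\ref{thm:mpgen} is in fact just this one-sentence reduction (the paragraph preceding the statement), so at the level of strategy there is nothing to distinguish.

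Where you go beyond the paper is in attempting a \emph{general} inversion of~(\ref{eq:equal}); the paper deliberately restricts to the geometric case $C_m=c\,\theta^{m-1}$ in Appendix~\ref{app:ident} and says explicitly that this is the only case needed. Two cautions about your general argument. First, your evaluation $\lambda=-f(-1)$ requires $|{-1}|<1/\theta$, i.e.\ $\theta<1$, which is \emph{not} implied by~(H.1); in the paper's application $\theta=(e^{-P_\Delta}-1)^{-1}$ can exceed $1$, and Appendix~\ref{app:ident} handles this by rewriting $\Phi(u)=cu/(1-\theta u)$ as a rational function and expanding in powers of $(1+u)$---an analytic continuation step you would need to insert. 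Second, your indirect positivity argument (non-negativity of $X(g)$ forces $\pi_j\ge 0$) is plausible but not a proof as written: non-negativity of the Laplace transform's argument does not by itself pin down the sign of the coefficients in an exponential representation. The paper simply sidesteps this by exhibiting the explicit geometric law~(\ref{eq:parm}), which is manifestly a probability on~$\N$; if you want Theorem~\ref{thm:mpgen} in full generality you would need an additional argument here, and the paper does not supply one either.
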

\begin{theorem}
Under the hypotheses of Theorem~\ref{thm:mpgen}, the process of 
successive entrances $\tau_n^e$ to $\Delta_n$ rescaled by $c_{n}^{-1}$ 
converges in law to a Poisson point process with parameter~$\lambda$.
\label{thm:mpgene}
\end{theorem}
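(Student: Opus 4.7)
The plan is to mirror the Laplace-transform machinery used for $\tau_n$ in Theorems~\ref{prop:fact} and~\ref{thm:mpgen}, replacing $X_n(g)$ by
\[
	X_{n}^{e}(g)(\omega) \;=\; \sum_{k>0}\, \bigchi_{\Delta_{n}}(\S^{k}\omega)\, \bigchi_{\Delta_{n}^{c}}(\S^{k-1}\omega)\, g(k c_{n})\,,
\]
so that $X_{n}^{e}(g)=\int g\,d\tau_{n}^{e}$. By Neveu's theorem together with Breiman's Proposition~8.49 (as invoked in the proof of Theorem~\ref{prop:fact}), it is enough to show that the factorial moments $\nu_{k}^{e}:=\lim_{n\to\infty}\Exp(X_{n}^{e}(g)^{k})$ exist for each $k$, satisfy $\limsup_{k}|\nu_{k}^{e}|^{1/k}/k<\infty$, and coincide with the Taylor coefficients at the origin of
\[
	F_{g}^{e}(z) \;=\; \exp\Big\{\lambda \int_{0}^{\infty}(e^{zg(t)}-1)\,dt\Big\}\,,
\]
the Laplace functional of a Poisson point process of parameter $\lambda$ integrated against $g$.

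The computation of $\nu_{k}^{e}$ follows the block-decomposition strategy of Appendix~\ref{app:fact}, with one modification: the extra factor $\bigchi_{\Delta_{n}^{c}}(\S^{k_{s}-1}\omega)=1-\bigchi_{\Delta_{n}}(\S^{k_{s}-1}\omega)$ at each visit time is handled by inclusion--exclusion over subsets $T\subseteq\{1,\ldots,k\}$ recording the positions where the subtracted $\bigchi_{\Delta_{n}}$ is kept. The resulting sums are correlations of the type already controlled by~(H.1--2): I would group the ordered tuples $(k_{1},\ldots,k_{k})$ into blocks of indices within mutual distance $\le\ell(n)$, apply~(H.2) to decouple well-separated blocks with exponentially small error, and evaluate each block's contribution using~(H.1). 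The crux is that the alternating signs from the inclusion--exclusion collapse the cluster series $\sum_{m\ge1}C_{m}\int(e^{zg(t)}-1)^{m}dt$ appearing in Theorem~\ref{prop:fact} into the single linear term $\lambda\int(e^{zg(t)}-1)dt$; informally, within each block only the ``fresh entry'' at the earliest index survives the cancellation, so that each cluster of close visits contributes exactly one point to the limit process.

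The main obstacle is to verify this combinatorial cancellation at the level of the dominant contributions, and simultaneously to control the error terms from~(H.2) and from diagonal collisions (where some $k_{s}-1$ coincides with another $k_{s'}$ or with $k_{s'}-1$) so that everything remaining is $o(c_{n}^{k})$ after the $c_{n}^{-k}$ normalisation. For the regime $C_{m}=c\,\theta^{m-1}$ that is relevant to this paper (cf.~(\ref{eq:parm})), the sum collapses to $\lambda=c/(1+\theta)=C_{1}/(1+\theta)$, matching the rate of the Poisson backbone of the marked Poisson process identified in Theorem~\ref{thm:mpgen}. Once the factorial moments $\nu_{k}^{e}$ are matched to those of Poisson$(\lambda)$ integrated against $g$, convergence of Laplace transforms on a disc around the origin, together with the standard point-process argument of~\cite{Nev,DV}, upgrades convergence in law of $X_{n}^{e}(g)$ to convergence in law of $\tau_{n}^{e}$ to the Poisson point process of parameter $\lambda$.
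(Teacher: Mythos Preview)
The paper does not carry out a separate moment computation for $\tau_n^e$. It simply asserts (in the sentence preceding Theorem~\ref{thm:mpgen}) that both Theorems~\ref{thm:mpgen} and~\ref{thm:mpgene} follow from Theorem~\ref{prop:fact} and the identification~(\ref{eq:equal}), together with general facts about convergence of point processes taken from~\cite{DV}, \cite{CC} and~\cite{Coe}. The intended argument is soft: once $\tau_n$ is known to converge in law to the marked Poisson process $\sum_kL_k\delta_{X_k}$, the entrance process---which records one point per run of consecutive visits---converges to the underlying simple process $\sum_k\delta_{X_k}$, i.e.\ to the Poisson process of rate~$\lambda$.

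Your route is genuinely different: you propose to redo the factorial-moment calculation of Appendix~\ref{app:fact} directly for $X_n^e(g)$, expanding each factor $\bigchi_{\Delta_n^c}\circ\S^{k_s-1}=1-\bigchi_{\Delta_n}\circ\S^{k_s-1}$. As written, however, this does not close under~(H.1)--(H.2). Already the first moment requires the existence of $\lim_n c_n^{-1}\mu(\Delta_n\cap\S^{-1}\Delta_n)$, and after inclusion--exclusion the higher moments involve sums $c_n^{-1}\sum\Exp\big(\prod_s\bigchi_{\Delta_n}\circ\S^{q_s}\big)$ in which some of the gaps $q_s-q_{s-1}$ are \emph{pinned to~$1$} rather than summed freely. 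Hypothesis~(H.1) records only the unconstrained cluster sums $C_m$ and says nothing about these pinned quantities, so the cancellation you describe (``only the fresh entry survives'') cannot be verified from~(H.1)--(H.2) alone; knowing that $C_m=c\,\theta^{m-1}$ does not help, since different sequences $(\Delta_n,c_n)$ can share the same $C_m$'s while having different values of $\lim_n c_n^{-1}\mu(\Delta_n\cap\S^{-1}\Delta_n)$. In the symbolic application of Section~\ref{sec:symbl} the missing information is of course available (indeed two entrances into $\Delta_n$ are automatically at least $n$ steps apart, so the clustered contributions vanish outright), but at the abstract level of Theorem~\ref{thm:mpgene} your direct computation would require an additional hypothesis beyond~(H.1)--(H.2).
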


\section{Symbolic Dynamics and Pianigiani-Yorke measure}
\label{sec:sft}
Let $\V=\{1,\ldots,\ell\}$ be a finite set of symbols which we will 
refer to as the base alphabet.  Throughout $A$ denotes an irreducible and aperiodic 
$0\!-\!1$ $\ell\times\ell$ matrix which defines the allowable 
transitions in a directed graph $\mathcal{G}$ of labelled vertices 
$\V$.  Define the space of one-sided allowable paths in the graph 
$\mathcal{G}$ by
\[
	\Sigma_{A}^{+}\;=\; \big\{x=(x_{n})\in\V^{\N}\colon\; 
	A(x_{i-1},x_{i})=1\,,\;\forall\,i\geq 1\big\}\;.
\]
The space $\Sigma_{A}^{+}$ is compact and metrisable when endowed with 
the Tychonov product topology (generated by the discrete topology on 
$\V$).  The \emph{shift} $\S$ (of finite type) is the map 
$\S\colon\Sigma_{A}^{+}\to\Sigma_{A}^{+}$ defined by 
$\S(x)_{n}=x_{n+1}$ for all $n\geq 0$.  This map is easily seen to be 
continuous and surjective.  The cylinders, denoted by
\[
	C[i_{0},\ldots,i_{m}]_{k}\;=\; \big\{ x\in\Sigma_{A}^{+}\colon\; 
	x_{j+k}=i_{j}\,,\;\forall\,j=0,\ldots,m \big\}\;,
\]
form a base of open (and closed) sets in $\Sigma_{A}^{+}$.  Let 
$C(\Sigma_{A}^{+})$ denote the space of complex valued continuous 
functions on $\Sigma_{A}^{+}$.  For $\psi\in C(\Sigma_{A}^{+})$, 
consider $\var_{n}(\psi) = \sup\{|\psi(x) - \psi(y)|\colon\: x_{i} = 
y_{i},\: i\leq n\}$.  Given $0 < \theta < 1$, define $|\psi|_{\theta} 
= \sup\big\{\var_{n}(\psi)/\theta^{n}\}$.  The space $\F_{\theta}^{+}= 
\{\psi\in C(\Sigma_{A}^{+})\colon\: |\psi|_{\theta} < \infty\}$ is a 
Banach space when endowed with the norm $\|\psi\|_{\theta} = 
\|\psi\|_{\infty} + |\psi|_{\theta}$, where $\|\cdot\|_{\infty}$ 
denotes the supremum norm.  The union $\F=\cup_{\theta}\,\F_{\theta}$ 
is referred to as the space of H\"older continuous functions on 
$\Sigma_{A}^{+}$.

Given a potential $\phi\in\F_{\theta}^{+}$, let  $\L_{\phi}$ be the transfer operator on $\F_{\theta}^{+}$.
It is defined as
\[
	(\L_{\phi}\psi)(x) = \sum_{\S y=x}\, 
	e^{\phi(y)}\psi(y)\;.
\]
The operator $\L_{\phi}$ has a maximum positive eigenvalue 
$e^{P(\phi)}$, which is simple and isolated.  Moreover, the rest of 
the spectrum is contained in a disc of radius strictly less than 
$e^{P(\phi)}$ (cf.~\cite{Bowen,Rue}).  The number $P= P(\phi)$ is 
called the \emph{pressure} of $\phi$.  There is a unique 
$\S$-invariant probability measure $\mu=\mu_{\phi}$ such that
\[
	P(\phi) 
	= h(\mu) + \int \phi\,d\mu\;,
\]
where $h(\mu)$ denotes the measure-theoretic entropy of $(\S,\mu)$.  
The pressure $P(\phi)$ can also be characterised as the maximum of 
$h(m) + \int\phi\,dm$ over all $\S$-invariant probabilities~$m$.  The 
measure $\mu$ is called the {\em equilibrium state} of $\phi$.  An 
eigenfunction $w$ of $\L_{\phi}$ corresponding to $e^{P(\phi)}$ may be 
taken to be strictly positive, in fact one may take $w$ to be the 
function
\[
	w \;=\; \lim_{n\to\infty}\, e^{-nP(\phi)}\,\L_{\phi}^{n}(\1) \;,
\] 
where $\1$ denotes the constant function equal to $1$.  Replacing 
$\phi$ by $\phi' = \phi - P(\phi) + \log(w) - \log(w\scirc\S)$, we see 
that $\L_{\phi'}\1=\1$ and $P(\phi')=0$.  In this case we say that 
$\phi'$ is \emph{normalised} (cf. \cite{PP}).  It is easy to see that $\phi$ and 
$\phi'$ have the same equilibrium state $\mu$.  In what follows we 
will assume that $\phi$ is normalised.  Note that in this case the 
transfer operator $\L_{\phi}$ satisfies
\begin{align}
	\int \psi \: d\mu 
	\;&=\; \int  \L_\phi(\psi) \: d\mu\,, \nonumber\\
	\int \psi_1\cdot(\psi_2 \scirc \S) \: d\mu \;&=\; \int 
	\L_\phi(\psi_1)\cdot\psi_2  \: d\mu \,,
	\label{eq:top2}
\end{align}
for all $\psi,\psi_{1},\psi_{2}\in C(\Sigma_{A}^{+})$.  

Let $\Delta\subseteq\V$ be a sub-alphabet such that $\Delta\not=\V$.  
Consider the closed $\S$-invariant subset $\Sigma_{\Delta}\subseteq 
\Sigma_{A}^{+}$ given by
\begin{equation}
	\Sigma_{\Delta}\;=\; \big\{ x\in\Sigma_{A}^{+}\colon\; 
	x_{i}\in\Delta,\;\forall\,i\geq 0 \big\}\;.
	\label{eq:sgdef}
\end{equation}

A general \emph{subsystem of finite type} in $\Sigma_{A}^{+}$ is 
obtained by prescribing a finite number of finite length allowable 
paths (of possibly different lengths) in $\mathcal{G}$ and defining 
the subsystem by the infinite allowable paths built out of finite 
pieces which respect these choices of prescribed paths.  However, 
using a higher-block representation of $\Sigma_{A}^{+}$ and choosing a 
sub-alphabet in the higher-block alphabet, by forcing the transitions 
to respect the chosen sub-alphabet we obtain the so-called ``0-step'' 
subsystem of finite type as above.  Hence there is no loss of 
generality to consider $\Sigma_{\Delta}$ as in (\ref{eq:sgdef}).

In this paper we will consider only the case when $\Sigma_{\Delta}$ is
an irreducible and aperiodic subshift of finite type in its alphabet
$\Delta$.  This means that the restriction of the matrix $A$ to the
symbols of~$\Delta$ defines a matrix $A_{\Delta}$ which is irreducible
and aperiodic.  In particular, the restriction of the shift
transformation~$T$ to $\Sigma_{\Delta}$ is topologically mixing in the
induced topology from $\Sigma_{A}^{+}$.

Let $\phi_{\pdelta}$ denote the restriction of $\phi$ to the subsystem 
$\Sigma_{\Delta}$.  Let $P_{\pdelta}$ be the pressure of 
$\phi_{\pdelta}$ with respect to the subsystem $(\Sigma_{\Delta},\S)$.  
(Note that since $\phi$ is assumed to be normalised we have 
$P(\phi)=0$, therefore $P_{\pdelta}<0$.)  Let $\mu_{\pdelta}$ denote the 
equilibrium state of $\phi_{\pdelta}$ with respect to the subsystem 
$(\Sigma_{\Delta},{\S})$.  Let $w_{\pdelta}$ be the strictly positive 
H\"older continuous function defined on~$\Sigma_{\Delta}$ by
\begin{equation}
	w_{\pdelta} \;=\; \lim_{n\to\infty}\, 
	e^{-nP_{\pdelta}}\,\L_{\phi_{\pdelta}}^{n}(\1) \;.
	\label{eq:wdel}
\end{equation}
Now define the \emph{restricted transfer 
operator} $\ldelta{}$ acting on the space of H\"older continuous 
functions $\F_{\theta}^{+}$ by
\[
	\ldelta{} \psi\;=\;{\L}_\phi(\psi\cdot\bigchi_\Delta)\;,
\]
and consider the subset of~$\Sigma_{A}^{+}$ given by
\[
	\cZ_{\Delta} \;=\; \{x\in\Sigma_{A}^{+}\colon\: 
	\exists\,b\in\Delta\,, \;A(b,x_{0})=1 \} \;.
\]
Note that since $A$ is irreducible and aperiodic in the full 
alphabet~$\V$, $\cZ_{\Delta}$ is a non-empty finite union of cylinder 
sets of~$\Sigma_{\Delta}^{+}$.  In particular, since $\mu$ is fully 
supported on~$\Sigma_{A}^{+}$ we have $\mu(\cZ_{\Delta})>0$.

An improvement to the main result of~\cite{CMS} gives the following 
result.  (See Appendix~\ref{app:cms} for a review of the main 
differences in the proof.)  %
\begin{proposition}
There exists a \emph{unique} H\"older continuous function $h_{\pdelta}$ 
defined on the \emph{whole space} $\Sigma_{A}^{+}$ such that
\[
	\L_{\Delta}(h_{\pdelta}) \;=\;  e^{P_{\pdelta} } 
	\,h_{\pdelta} \;,
\]
and $h_{\pdelta}|_{\Sigma_{\Delta}}\equiv w_{\pdelta}$, where 
$w_{\pdelta}$ is given by~(\ref{eq:wdel}).  The function $h_{\pdelta}$ 
is strictly positive on~$\cZ_{\Delta}$ and it is zero on the 
complement~$\cZ_{\Delta}^{c}$.  Moreover,
\[
	\Big\Vert e^{-nP_{\Delta}}\L_{\Delta}^n(\psi) - 
	h_{\pdelta} \int_{\Sigma_{\Delta}} \psi \: d\mu_{\pdelta} 
	\Big\Vert_{\infty} \; \underset{n \to 
	\infty}{\longrightarrow}\; 0 \;,
\]
for all $\psi \in C({\Sigma_{A}^{+}})$.
\label{prop:cms}
\end{proposition}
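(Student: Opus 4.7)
The plan is to reduce the statement to the spectral theory of the restricted transfer operator $\L_{\Delta}$ on the Banach space $\F_{\theta}^{+}$, treating the existence of $w_{\pdelta}$, $\mu_{\pdelta}$ and the spectral gap of $\L_{\phi_{\pdelta}}$ on $\Sigma_{\Delta}$ as given by \cite{CMS}, and to realise $h_{\pdelta}$ as the explicit limit
\[
    h_{\pdelta}(x) \;=\; \lim_{n\to\infty}\,e^{-nP_{\pdelta}}\,\L_{\Delta}^{n}\1(x),\qquad x\in\Sigma_{A}^{+}.
\]
Unpacking $\L_{\Delta}^{n}\1(x) = \sum e^{\phi_{n}(y)}$ over preimages $y$ of $x$ whose first $n$ symbols lie in $\Delta$, two cases are immediate. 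If $x\in\cZ_{\Delta}^{c}$ then no $b\in\Delta$ satisfies $A(b,x_{0})=1$, so already $\L_{\Delta}\1(x)=0$ and $h_{\pdelta}(x)=0$. If $x\in\Sigma_{\Delta}$, the admissibility of the full preimage chain together with $x\in\Sigma_{\Delta}$ forces $y\in\Sigma_{\Delta}$, whence $\L_{\Delta}^{n}\1(x)=\L_{\phi_{\pdelta}}^{n}\1(x)$ and the limit equals $w_{\pdelta}(x)$ by~(\ref{eq:wdel}).

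For $x\in\cZ_{\Delta}\setminus\Sigma_{\Delta}$, I would use the one-step recursion
\[
    e^{-nP_{\pdelta}}\,\L_{\Delta}^{n}\1(x) \;=\; e^{-P_{\pdelta}}\sum_{b\in\Delta,\,A(b,x_{0})=1}e^{\phi(bx)}\,e^{-(n-1)P_{\pdelta}}\,\L_{\Delta}^{n-1}\1(bx),
\]
and derive existence and Hölder regularity of the limit from the quasi-compactness and spectral gap of $\L_{\Delta}$ on $\F_{\theta}^{+}$. The eigenfunction identity $\L_{\Delta}h_{\pdelta}=e^{P_{\pdelta}}h_{\pdelta}$ then follows by passing to the limit in $\L_{\Delta}^{n+1}\1=\L_{\Delta}(\L_{\Delta}^{n}\1)$. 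Strict positivity on $\cZ_{\Delta}$ follows because irreducibility and aperiodicity of $A_{\Delta}$ guarantee that any admissible $b\in\Delta$ with $A(b,x_{0})=1$ can be prolonged backwards indefinitely inside $\Delta$, yielding a positive contribution to every iterate. Uniqueness follows from simplicity of the leading eigenvalue $e^{P_{\pdelta}}$ of $\L_{\Delta}$ on $\F_{\theta}^{+}$: any other Hölder eigenfunction must be a scalar multiple of $h_{\pdelta}$, and the normalisation $h_{\pdelta}|_{\Sigma_{\Delta}}\equiv w_{\pdelta}$ fixes the scalar to be~$1$.

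The convergence $e^{-nP_{\pdelta}}\L_{\Delta}^{n}\psi\to h_{\pdelta}\int\psi\,d\mu_{\pdelta}$ comes from the spectral decomposition $e^{-nP_{\pdelta}}\L_{\Delta}^{n}=\Pi+R_{n}$ on $\F_{\theta}^{+}$, where $\Pi\psi = h_{\pdelta}\cdot\nu(\psi)$ is the rank-one projection onto the leading eigenspace and $\Vert R_{n}\Vert\to 0$ geometrically. Identifying the left eigenfunctional $\nu$ with $\psi\mapsto\int\psi\,d\mu_{\pdelta}$ is a duality computation: one checks the adjoint relation $\int\L_{\Delta}\psi\,d\mu_{\pdelta}=e^{P_{\pdelta}}\int\psi\,d\mu_{\pdelta}$, which reduces to the analogous identity for $\L_{\phi_{\pdelta}}$ on $\Sigma_{\Delta}$ since $\mu_{\pdelta}$ is supported there; simplicity of $e^{P_{\pdelta}}$ then forces $\nu$ to be proportional to this functional, with the proportionality constant fixed by the usual normalisation $\int w_{\pdelta}\,d\mu_{\pdelta}=1$. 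A density argument extends the convergence from $\F_{\theta}^{+}$ to $C(\Sigma_{A}^{+})$. The main obstacle throughout is establishing quasi-compactness of $\L_{\Delta}$ on all of $\F_{\theta}^{+}$ (not merely on Hölder functions on $\Sigma_{\Delta}$); the corresponding Lasota--Yorke/Doeblin--Fortet estimates must handle the transition region $\cZ_{\Delta}\setminus\Sigma_{\Delta}$ where $h_{\pdelta}$ is nontrivial but does not reduce to $w_{\pdelta}$. This refinement of the analysis of \cite{CMS} is precisely the content of Appendix~\ref{app:cms}.
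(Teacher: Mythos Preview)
Your overall framework coincides with the paper's: realise $h_{\pdelta}$ as $\lim_{n}e^{-nP_{\pdelta}}\L_{\Delta}^{n}\1$, observe that it vanishes on $\cZ_{\Delta}^{c}$ trivially and restricts to $w_{\pdelta}$ on $\Sigma_{\Delta}$, and then extend the convergence to all of $C(\Sigma_{A}^{+})$ by density. The divergence is in the key technical step. You propose to obtain the limit from quasi-compactness of $\L_{\Delta}$ on $\F_{\theta}^{+}$ via Lasota--Yorke/Doeblin--Fortet inequalities and then read off $h_{\pdelta}$, $\nu$ from the rank-one projection. The paper does \emph{not} do this: Appendix~\ref{app:cms} builds a projection $\pi\colon\Sigma_{A}^{+}\to\Sigma_{\Delta}$ (concatenating $x_{0},\ldots,x_{N(x)}$ with a fixed path of length $q$ back into $\Sigma_{\Delta}$), proves the pointwise comparison $e^{-c\theta^{N(x)}}\le \L_{\Delta}^{k}f(x)/\L_{\Delta}^{k}f(\pi(x))\le e^{c\theta^{N(x)}}$ on $\cZ_{\Delta}$, and then establishes the ratio bound $e^{-c(f)r^{n}}\le e^{-P_{\pdelta}}\L_{\Delta}^{n+1}f(x)/\L_{\Delta}^{n}f(x)\le e^{c(f)r^{n}}$. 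These Birkhoff-cone-style estimates show directly that $\{e^{-nP_{\pdelta}}\L_{\Delta}^{n}f\}$ is Cauchy in $C(\Sigma_{A}^{+})$ for every strictly positive cylindrical $f$; H\"older regularity of the limit is then checked by a bare-hands variation estimate. Your route would also work (multiplication by $\bigchi_{\Delta}$ preserves $\F_{\theta}^{+}$ and $\L_{\phi}$ already has a spectral gap), and it packages uniqueness and the convergence statement more cleanly; the paper's route is more constructive, avoids any appeal to abstract quasi-compactness, and yields the uniform H\"older bound on $e^{-nP_{\pdelta}}\L_{\Delta}^{n}\1$ used later in Lemma~\ref{lem:decay} as a direct byproduct. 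One caution on your sketch: the adjoint identity you write, $\int\L_{\Delta}\psi\,d\mu_{\pdelta}=e^{P_{\pdelta}}\int\psi\,d\mu_{\pdelta}$, is not correct as stated, since the equilibrium state $\mu_{\pdelta}$ is $\S$-invariant rather than a conformal measure for $\L_{\phi_{\pdelta}}$; the left eigenfunctional is the conformal measure $\nu_{\pdelta}$, and the appearance of $\mu_{\pdelta}$ in the limit is tied to the specific normalisation $w_{\pdelta}=\lim e^{-nP_{\pdelta}}\L_{\phi_{\pdelta}}^{n}\1$, so that step needs a little more care.
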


The Borel measure $\mu_{\ss{PY}}$ defined by
\[
	\mu_{\ss{PY}}(B)\;=\;\int_{B} h_{\pdelta}\,d\mu,
\]
for every Borel set $B\subseteq{\Sigma_{A}^{+}}$, is called the 
\emph{Pianigiani-Yorke measure} of the subsystem 
$(\Sigma_{\Delta},{\S})$.  This measure is fully supported 
on~$\cZ_{\Delta}$.  In Appendix~\ref{app:cms} we show how the main 
result of~\cite{CMS} implies the remaining statements of this section.

The measure $\mu_{PY}$ is a quasi-stationary measure satisfying
\begin{equation}
	\mu_{\scriptscriptstyle{PY}}(B) \;=\; e^{-P_{\Delta}} 
	\,\mu_{\scriptscriptstyle{PY}}(\S^{-1}B\cap \Delta)\;,
	\label{eq:mupy}
\end{equation}  
where we have identified the set of vertices $\Delta$ with the subset 
of $\Sigma_{A}^{+}$ given by 
$\{x\in\Sigma_{A}^{+}\colon\:x_{0}\in\Delta\}$.  For each $n>0$ 
consider the set
\begin{equation}
	\Delta_{n}\;=\; \big\{ x\in \Sigma_{A}^{+}\colon\;x_i\in 
	\Delta,\,\hbox{for}\:i=0,\ldots,n-1 \big\}\;.
	\label{eq:deln}
\end{equation}
We note that $\Delta_{n}\subseteq\Delta_{n-1}$ for each $n>0$ and 
$\cap_{n>0}\,\Delta_{n}=\Sigma_{\Delta}$.  The Pianigiani-Yorke 
measure also satisfies
\begin{equation}
	\frac{\mu_{\scriptscriptstyle{PY}}(B)}
	{\mu_{\scriptscriptstyle{PY}}({\Sigma_{A}^{+}})} 
	\;=\; \lim_{k \to \infty} \, \mu\big(\S^{-k} B \,|\, 
	\Delta_{k}\big) \;,
	\label{eq:cond}
\end{equation}
for all Borel sets $B$ of ${\Sigma_{A}^{+}}$, and then it carries a 
statistical information of how subsystems of finite type are embedded 
into larger systems.

\section{Limit laws for symbolic systems}
\label{sec:symbl}
Let $\Delta_{n}$ be defined by~(\ref{eq:deln}) and consider the 
hitting time point process $\tau_{n}$ of $\Delta_{n}$ rescaled by 
$c_{n}^{-1}$ of~(\ref{eq:tau}), where $c_{n}$ is to be suitably chosen 
below.  The aim is to show that Hypotheses (H.1-2) are satisfied and 
then to conclude convergence in law of $\tau_{n}$ to a marked Poisson 
point process, identifying the parameters $\lambda$ and $\pi_{j}$.

Usually $c_{n}=\mu(\Delta_{n})$ is the natural choice to rescale 
dynamically defined hitting time processes (for instance by 
conditioning the process to starting at~$\Delta_{n}$, one studies 
asymptotic return times and this scale is very natural in view of 
Ka\v{c}'s Lemma, see more comments in~\cite{Coe}).  Hence we begin with an 
asymptotic estimate on $\mu(\Delta_n)$ in terms of the relative 
pressure~$P_{\Delta}$.  Since $P_{\Delta}$ is an intrinsic constant 
associated to the triple $(\Sigma_{A}^{+},\Sigma_{\Delta},\phi)$, we 
will then take $c_{n}=e^{nP_{\Delta}}$ as our scale choice.  %
\begin{lemma}
Let $h_\Delta$ be the function in Proposition~\ref{prop:cms}.  For 
every $s\geq 0$ we have
\[
	\lim_{n\to\infty} e^{-n P_{\pdelta}}\,\mu(\Delta_{n+s})\;=\; e^{s 
	P_{\pdelta}}\int h_{\pdelta}\,d\mu\; =\; e^{s 
	P_{\pdelta}}\mu_{\ss{PY}}({{\Sigma_{A}^{+}}})\; =\; 
	\mu_{\ss{PY}}(\Delta_s)\;,
\]
where we have defined $\Delta_0={{\Sigma_{A}^{+}}}$.
\label{lem:dels}
\end{lemma}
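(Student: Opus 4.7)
The plan is to rewrite $\mu(\Delta_{n+s})$ as an integral against a power of the restricted transfer operator $\L_\Delta$, and then invoke the convergence supplied by Proposition~\ref{prop:cms}. First I would factor the cylinder indicator as $\bigchi_{\Delta_{n+s}} = \bigchi_{\Delta_n}\cdot(\bigchi_{\Delta_s}\scirc\S^n)$ and use the $n$-fold iterate of the duality identity~(\ref{eq:top2}) to obtain $\mu(\Delta_{n+s}) = \int \L_\phi^n(\bigchi_{\Delta_n})\cdot\bigchi_{\Delta_s}\,d\mu$. Then I would prove the pointwise identity $\L_\phi^n(\bigchi_{\Delta_n}) = \L_\Delta^n(\1)$ by induction on $n$: the single-step computation $\L_\phi(\bigchi_{\Delta_k}\psi) = \bigchi_{\Delta_{k-1}}\cdot\L_\Delta(\psi)$ follows from the factorisation $\bigchi_{\Delta_k} = \bigchi_\Delta\cdot(\bigchi_{\Delta_{k-1}}\scirc\S)$ together with the pull-out rule $\L_\phi(f\cdot(g\scirc\S)) = g\cdot\L_\phi(f)$, and iterating collapses the indicator entirely since $\Delta_0 = \Sigma_A^+$.

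Next I would apply Proposition~\ref{prop:cms} with $\psi=\1$. Because $\int_{\Sigma_\Delta}\1\,d\mu_{\pdelta}=1$, the proposition gives $e^{-nP_{\pdelta}}\L_\Delta^n(\1)\to h_{\pdelta}$ uniformly on $\Sigma_A^+$, so that
\[
e^{-nP_{\pdelta}}\mu(\Delta_{n+s}) \;=\; \int e^{-nP_{\pdelta}}\L_\Delta^n(\1)\cdot\bigchi_{\Delta_s}\,d\mu \;\longrightarrow\; \int h_{\pdelta}\,\bigchi_{\Delta_s}\,d\mu \;=\; \mu_{\ss{PY}}(\Delta_s).
\]
To recast this limit in the first form announced in the lemma, I would iterate the quasi-stationarity relation~(\ref{eq:mupy}): noting that $\S^{-1}\Delta_k\cap\Delta = \Delta_{k+1}$, it yields $\mu_{\ss{PY}}(\Delta_{k+1}) = e^{P_{\pdelta}}\mu_{\ss{PY}}(\Delta_k)$, and a simple induction starting from $\Delta_0=\Sigma_A^+$ gives $\mu_{\ss{PY}}(\Delta_s) = e^{sP_{\pdelta}}\int h_{\pdelta}\,d\mu$. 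The four expressions listed in the lemma then line up.

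The only genuinely technical step is the transfer-operator identity $\L_\phi^n(\bigchi_{\Delta_n}) = \L_\Delta^n(\1)$; once it is in hand, the conclusion is an essentially immediate consequence of the spectral description already packaged in Proposition~\ref{prop:cms} together with the quasi-stationarity of the Pianigiani-Yorke measure.
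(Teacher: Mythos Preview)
Your proof is correct and follows essentially the same route as the paper: both arguments reduce $\mu(\Delta_{n+s})$ to $\int(\L_\Delta^n\1)\cdot\bigchi_{\Delta_s}\,d\mu$ and then invoke Proposition~\ref{prop:cms}. The only cosmetic differences are that the paper runs the induction directly at the level of the integral (peeling off one $\bigchi_\Delta$ at a time via~(\ref{eq:top2})) rather than first isolating the pointwise identity $\L_\phi^n(\bigchi_{\Delta_n})=\L_\Delta^n(\1)$, and it obtains the equality $\mu_{\ss{PY}}(\Delta_s)=e^{sP_{\pdelta}}\int h_{\pdelta}\,d\mu$ by continuing the same induction to $\int\L_\Delta^{n+s}(\1)\,d\mu$ instead of appealing to~(\ref{eq:mupy}).
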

\begin{proof}
Note that
\[
	\bigchi_{\Delta_{n+s}}\;=\; \prod_{j=0}^{n+s-1} 
	\bigchi_\Delta\scirc {\S}^j\;,
\]
for every $n>0$.  Therefore
\[
	\mu(\Delta_{n+s})\;=\;\int \bigchi_{\Delta_{n+s}}\,d\mu\;=\; 
	\int\bigchi_\Delta\cdot\bigchi_{\Delta_{n+s-1}}\scirc 
	{\S}\,d\mu\;.
\]
Since ${\L_\phi}$ satisfies property~(\ref{eq:top2}) we obtain
\[
	\mu(\Delta_{n+s})\;=\;\int \ldelta{}\1\cdot 
	\bigchi_{\Delta_{n+s-1}}\,d\mu\;,
\]
and by induction we have
\[
	\mu(\Delta_{n+s})\;= \; \int (\ldelta{n}\1)\cdot 
	\bigchi_{\Delta_s}\,d\mu\; =\; \int \ldelta{n+s}\1\,d\mu\;.
\]
Hence Proposition~\ref{prop:cms} gives
\[
	\mu(\Delta_{n+s})\;= \; e^{n P_{\pdelta}}\int_{\Delta_s} 
	h_{\pdelta}\,d\mu + o( e^{n P_{\pdelta}})\; =\;e^{(n+s) P_{\pdelta}}\int 
	h_{\pdelta}\,d\mu + o( e^{n P_{\pdelta}})\;.
\]
\vskip-15pt
\end{proof}

The next results are used to prove that Hypotheses (H.1-2) are 
satisfied.  
\begin{lemma}
There exist $K>0$ and $0<\gamma<1$ such that
\[
	\Big| \Exp( \bigchi_{\Delta_s}\cdot\bigchi_{B}\scirc 
	{\S}^{s+r} ) - \mu(\Delta_s)\,\mu(B) \Big|\;\leq\; 
	K\,\gamma^r\,e^{s P_\Delta}\,\mu(B)\;,
\]
for every $s,r>0$ and for every Borel set $B\subseteq 
{{\Sigma_{A}^{+}}}$.
\label{lem:decay}
\end{lemma}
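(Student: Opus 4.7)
The plan is to convert the left-hand side into an integral involving powers of the two transfer operators and then exploit a spectral gap for each. First, applying the invariance relation~(\ref{eq:top2}) $s+r$ times we obtain
\[
	\Exp\big(\bigchi_{\Delta_s}\cdot\bigchi_B\scirc \S^{s+r}\big) \;=\; \int \L_\phi^{s+r}(\bigchi_{\Delta_s})\,\bigchi_B \,d\mu \;.
\]
Writing $\bigchi_{\Delta_s}=\bigchi_\Delta\cdot(\bigchi_{\Delta_{s-1}}\scirc \S)$ and using $\L_\phi(f\cdot(g\scirc \S))=g\cdot\L_\phi f$, a short induction on $s$ identifies the first $s$ iterates as $\L_\phi^s(\bigchi_{\Delta_s})=\ldelta{s}\1$. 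Hence the problem reduces to comparing $\int \L_\phi^r(\ldelta{s}\1)\,\bigchi_B\,d\mu$ with $\mu(\Delta_s)\mu(B)=\big(\int\ldelta{s}\1\,d\mu\big)\cdot\int\bigchi_B\,d\mu$.

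Next I would invoke the Hölder-norm version of Proposition~\ref{prop:cms}, whose proof in Appendix~\ref{app:cms} rests on $e^{P_{\pdelta}}$ being an isolated simple eigenvalue of $\ldelta{}$ on $\F_\theta^{+}$: there exist $C_{1}>0$ and $\eta\in(0,1)$ with
\[
	\big\| e^{-sP_{\pdelta}}\,\ldelta{s}\1 - h_{\pdelta}\big\|_{\theta} \;\leq\; C_{1}\,\eta^{s}\;,
\]
and consequently $\|\ldelta{s}\1\|_{\theta}\le C_{2}\,e^{sP_{\pdelta}}$ uniformly in $s\ge 0$. This upgrades the sup-norm statement printed in Proposition~\ref{prop:cms} to a quantitative bound in the Banach space on which the second spectral gap operates.

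The closing step is the usual Ruelle--Perron--Frobenius spectral gap for the normalised operator $\L_\phi$: there exist $C_{3}>0$ and $\gamma\in(0,1)$ such that, for every $f\in\F_\theta^{+}$ and every $r\ge 0$,
\[
	\Big\| \L_\phi^{r}f - \Big(\int f\,d\mu\Big)\,\1\Big\|_{\infty} \;\leq\; C_{3}\,\gamma^{r}\,\|f\|_{\theta}\;.
\]
Applying this to $f=\ldelta{s}\1$, whose $\mu$-integral equals $\mu(\Delta_{s})$ by~(\ref{eq:top2}), multiplying by $\bigchi_B$ and integrating gives
\[
	\Big|\Exp\big(\bigchi_{\Delta_s}\cdot\bigchi_B\scirc \S^{s+r}\big)-\mu(\Delta_{s})\,\mu(B)\Big| \;\leq\; C_{2}C_{3}\,\gamma^{r}\,e^{sP_{\pdelta}}\,\mu(B)\;,
\]
which is the desired inequality with $K=C_{2}C_{3}$.

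The main obstacle is the second step: Proposition~\ref{prop:cms} as stated yields only sup-norm convergence with no rate, while the argument above needs a $\F_\theta^{+}$-norm exponential bound to feed into the decay-of-correlations estimate. Verifying that the spectral analysis of $\ldelta{}$ from~\cite{CMS}, revisited in Appendix~\ref{app:cms}, really produces this quasi-compact spectral gap (and not just simplicity of the leading eigenvalue) is the key point; once it is in hand, the rest is a clean two-step chaining of known facts.
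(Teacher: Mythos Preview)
Your proof is correct and follows essentially the same route as the paper's: rewrite the expectation as $\int\bigchi_B\cdot\L_\phi^{r}(\ldelta{s}\1)\,d\mu$, apply the spectral gap of $\L_\phi$ on zero-mean H\"older functions, and feed in the uniform bound $\sup_s\|e^{-sP_{\pdelta}}\ldelta{s}\1\|_\theta<\infty$. Regarding the obstacle you flag, the paper handles it exactly as you anticipate---it simply asserts the uniform H\"older bound, citing~\cite{CMS} and Appendix~\ref{app:cms}, where the variation estimate $\var_k(e^{-sP_{\pdelta}}\ldelta{s}\1)\le C\theta^k$ (with $C$ independent of $s$) is derived by a direct computation from the uniform sup-norm bound, so no additional spectral-gap machinery for $\ldelta{}$ is needed.
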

\begin{proof}
We note that
\[
	\Exp( \bigchi_{\Delta_s}\cdot\bigchi_{B}\scirc {\S}^{s+r} 
	) \;=\; \Exp( \bigchi_{B}\cdot{\L}^r(\ldelta{s}\1) 
	)\;.
\]
From the spectral properties of ${\L}$ we know that there exists 
$0<\gamma<1$ and $K>0$ such that for every $k>0$,
\[
	\big\Vert {\L}^k w \big\Vert_\infty\;\leq\; 
	K\,\gamma^k\,\|w\|_\theta\;,
\]
whenever $w\in{\F}_\theta^{+}$ with $\int w\,d\mu=0$ (cf.~\cite{PP}).  
Since $e^{-sP_\Delta} \ldelta{s}\1$ has uniformly bounded H\"older 
norm (see~\cite{CMS} and Appendix~\ref{app:cms}), taking $w=w_s= e^{-s 
P_\Delta}(\ldelta{s}\1-\mu(\Delta_s))$, there exists $K'>0$ 
independent of $r$ and $s$ such that
\[
	\big| \Exp( \bigchi_{B}\cdot{\L}^r w_s ) \big| 
	\;\leq\; K'\, \gamma^r\,\mu(B)\;,
\]
for every $r,s>0$.  Therefore the lemma follows.
\end{proof}
\begin{lemma}
For all integer $m\geq 1$, the limit
\[
	\widetilde{C}_m\;=\;\lim_{n\to\infty}\; \mu(\Delta_n)^{-1} 
	\sum_{\stackrel{\scriptstyle 
	{0=q_0<q_1<\cdots<q_{m-1}}}{q_s-q_{s-1}\leq n/m}} 
	\Exp\Big(\prod_{s=0}^{m-1}\bigchi_{\Delta_n}\scirc {\S}^{q_s} 
	\Big)\; .
\]
Moreover,
\[
	\widetilde{C}_{m} \;=\; (e^{-P_{\Delta}}-1)^{-(m-1)} 
	\:,
\]
for all $m\geq 1$.
\label{lem:cm}
\end{lemma}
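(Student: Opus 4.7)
The plan is to identify each summand as a single $\Delta$-cylinder measure, change variables to gap parameters, and then factor the resulting sum into independent geometric sums, relying on a uniform-in-$D$ strengthening of Lemma~\ref{lem:dels}.

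First, when $m\geq 2$ and $n$ is large, the constraint $q_s-q_{s-1}\leq n/m$ implies $q_s-q_{s-1}<n$, so the cylinder windows $[q_s,q_s+n-1]$ form an overlapping chain whose union is $[0,q_{m-1}+n-1]$. Hence
\[
\prod_{s=0}^{m-1}\bigchi_{\Delta_n}\scirc T^{q_s} \;=\; \bigchi_{\Delta_{n+q_{m-1}}}\,,
\]
whose $\mu$-integral is $\mu(\Delta_{n+q_{m-1}})$. Passing to gap variables $d_s=q_s-q_{s-1}\in\{1,\ldots,\lfloor n/m\rfloor\}$ and setting $D=d_1+\cdots+d_{m-1}$, the summation factorises into independent sums over each $d_s$:
\[
\sum_{0=q_0<\cdots<q_{m-1},\,q_s-q_{s-1}\leq n/m}\mu(\Delta_{n+q_{m-1}}) \;=\; \sum_{d_1,\ldots,d_{m-1}=1}^{\lfloor n/m\rfloor}\mu(\Delta_{n+D})\,.
\]
The case $m=1$ is trivial: the sum reduces to $\mu(\Delta_n)/\mu(\Delta_n)=1=(e^{-P_\Delta}-1)^0$.

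Next I need $\mu(\Delta_{n+D})/\mu(\Delta_n)=e^{DP_\Delta}(1+o(1))$ with $o(1)\to 0$ \emph{uniformly} in $D$. From the computation in the proof of Lemma~\ref{lem:dels}, $\mu(\Delta_{n+D})=\int(\L_\Delta^n\1)\,\bigchi_{\Delta_D}\,d\mu$. Set $\epsilon_n:=\|e^{-nP_\Delta}\L_\Delta^n\1-h_\Delta\|_\infty$, which tends to $0$ by Proposition~\ref{prop:cms}. Iterating the Pianigiani-Yorke identity~(\ref{eq:mupy}) gives $\int_{\Delta_D}h_\Delta\,d\mu=e^{DP_\Delta}\int h_\Delta\,d\mu$, while Proposition~\ref{prop:cms} also supplies the uniform bound $\mu(\Delta_D)\leq C\,e^{DP_\Delta}$. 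Combining these,
\[
\mu(\Delta_{n+D}) \;=\; e^{(n+D)P_\Delta}\bigl(\mu_{\ss{PY}}(\Sigma_A^+)+O(\epsilon_n)\bigr)\,,
\]
with the $O(\epsilon_n)$ term independent of $D$. Dividing by the analogous expression for $\mu(\Delta_n)$ yields the desired uniform ratio asymptotic.

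Plugging this back into the factorised sum gives
\[
\mu(\Delta_n)^{-1}\sum_{d_1,\ldots,d_{m-1}=1}^{\lfloor n/m\rfloor}\mu(\Delta_{n+D}) \;=\; \bigl(1+O(\epsilon_n)\bigr)\prod_{s=1}^{m-1}\sum_{d=1}^{\lfloor n/m\rfloor}e^{dP_\Delta}\,.
\]
Since $P_\Delta<0$, each inner sum converges to $\sum_{d\geq 1}e^{dP_\Delta}=(e^{-P_\Delta}-1)^{-1}$, so $\widetilde{C}_m=(e^{-P_\Delta}-1)^{-(m-1)}$. The main obstacle is precisely this uniformity in $D$: since $D$ can grow linearly with $n$, a merely pointwise (in $D$) version of Lemma~\ref{lem:dels} would not suffice, and the argument depends essentially on the supremum-norm convergence of $e^{-nP_\Delta}\L_\Delta^n\1$ from Proposition~\ref{prop:cms}, together with the resulting uniform upper bound on $e^{-DP_\Delta}\mu(\Delta_D)$.
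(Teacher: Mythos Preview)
Your proof is correct and follows essentially the same route as the paper: both arguments first collapse the product to $\bigchi_{\Delta_{n+q_{m-1}}}$, then pass to the limit using the ratio asymptotics $\mu(\Delta_{n+q_{m-1}})/\mu(\Delta_n)\to e^{q_{m-1}P_\Delta}$ derived from Proposition~\ref{prop:cms}. The only cosmetic differences are that the paper justifies the interchange of limit and sum by dominated convergence (a uniform bound $C\eta^{q_{m-1}}$ together with pointwise convergence) rather than your stronger uniform-in-$D$ estimate, and it evaluates the resulting sum $\sum_{0<q_1<\cdots<q_{m-1}}\beta^{q_{m-1}}$ via a binomial count in $q_{m-1}$ rather than your gap-variable factorisation into independent geometric sums.
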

\begin{proof}
For fixed $n>0$, since $q_s-q_{s-1}\leq n/m \leq n$ for each $s$, 
we conclude that
\[
	\prod_{s=0}^{m-1}\bigchi_{\Delta_n}\scirc {\S}^{q_s} \;=\; 
	\prod_{j=0}^{q_{m-1}+n-1} \bigchi_{\Delta}\scirc {\S}^{j} \;=\; 
	\bigchi_{\Delta_{n+q_{m-1}}} \;.
\]
Write $\beta=e^{P_{\Delta}}$.  Using Lemma~\ref{lem:dels} we see that 
${\mu(\Delta_{n+q_{m-1}})}/{\mu(\Delta_n)}$ is uniformly bounded in 
$n$ by $C\eta^{q_{m-1}}$, for some $C>0$ and $0<\eta<1$, and it 
converges to $\beta^{q_{m-1}}$ as $n\to\infty$.  Hence we obtain
\[
	\widetilde{C}_m\;=\;\lim_{n\to\infty}\; 
	\sum_{\stackrel{\scriptstyle 
	{0=q_0<q_1<\cdots<q_{m-1}}}{q_s-q_{s-1}\leq n/m}} 
	\frac{\mu(\Delta_{n+q_{m-1}})}{\mu(\Delta_n)}\; \;=\; 
	\sum_{0=q_0<q_1<\cdots<q_{m-1}} \beta^{q_{m-1}}\;,
\]
and the latter summation gives $\widetilde{C}_1=1$, and 
\[
       \widetilde{C}_m\; =\; \sum_{q=m-1}^\infty 
	\frac{(q-1)\cdots(q-m+2)}{(m-2)!}\,\beta^{q} \;=\; 
	\Big(\frac{\beta}{1-\beta} \Big)^{m-1}\;\textup{for}\, m\geq 2.
\]
\vskip-15pt
\end{proof}

Lemma~\ref{lem:cm} shows that the pair $(\Delta_{n},c_{n})$ with 
$c_{n}=e^{nP_{\Delta}}$ satisfy~(H.1) with $\ell(n)=n$. Note from 
Lemma~\ref{lem:dels} that 
$c_{n}^{-1}\,\mu(\Delta_{n})\to{c}=\int{h_{\pdelta}}d\mu$ and hence
\[
	C_{m} \;=\; c\,\widetilde{C}_{m} \;=\; c\,\theta^{m-1}\:,
\]
where $\theta=(e^{-P_{\Delta}}-1)^{-1}$.  Furthermore, 
Lemma~\ref{lem:decay} shows that~(H.2) is also satisfied.  
Consequently, we may apply Theorems~\ref{thm:mpgen} 
and~\ref{thm:mpgene} together with the parameters given 
in~(\ref{eq:parm}) to obtain 
\begin{theorem}
If $\Sigma_{\Delta}$ is an irreducible and aperiodic subshift of finite type of 
$(\Sigma_{A}^{+},\S)$ then the process of hitting times $\tau_n$ in 
$\Delta_n$ scaled by $e^{-nP_{\pdelta}}$ converges in law to a marked 
Poisson point process with constant parameter measure 
$\lambda\,\pi$, where the parameters are given by
\[
	\lambda \;=\; (1-e^{P_{\pdelta}})\, \int h_{\pdelta}\, d\mu 
	\qquad\mbox{and}\qquad \pi_{j} \;=\; 
	(1-e^{P_{\pdelta}})\,e^{(j-1)P_{\pdelta}} \:,
	\label{eq:const}
\]
for $j\geq{1}$, where $P_{\pdelta}$ is the pressure of the restriction of 
$\phi$ to the subsystem $\Sigma_{\Delta}$ and $h_{\pdelta}$ is the 
density of the Pianigiani-Yorke measure associated to the triple 
$(\Sigma_{A}^{+},\Sigma_{\Delta},\phi)$.
\label{thm:mpsymbv}
\end{theorem}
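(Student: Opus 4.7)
The plan is to obtain the theorem as a direct application of Theorems~\ref{thm:mpgen} and~\ref{thm:mpgene} by verifying the abstract hypotheses (H.1) and (H.2) for the pair $(\Delta_{n},c_{n})$ with $c_{n}=e^{nP_{\pdelta}}$ and the pacing function $\ell(n)=n$, and then by identifying the resulting parameters $(\lambda,\pi)$ from the explicit solution~(\ref{eq:parm}) of the matching equation~(\ref{eq:equal}).

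For (H.1), the essential combinatorial input has been isolated in Lemma~\ref{lem:cm}. That lemma normalises by $\mu(\Delta_{n})$ rather than by $c_{n}$, so I would convert between the two using Lemma~\ref{lem:dels}, which gives $c_{n}^{-1}\mu(\Delta_{n})\to c:=\int h_{\pdelta}\,d\mu>0$. This yields $C_{m}=c\,\widetilde{C}_{m}=c\,\theta^{m-1}$ with $\theta=(e^{-P_{\pdelta}}-1)^{-1}$, and in particular $C_{1}=c>0$ together with the geometric bound $C_{m}\le c\,\theta^{m}$ required in (H.1). For (H.2), I would invoke Lemma~\ref{lem:decay}: the same $\chi$-product collapses as in the proof of Lemma~\ref{lem:cm}, namely $\prod_{s=0}^{m}\chi_{\Delta_{n}}\circ{\S}^{j_{s}}=\chi_{\Delta_{n+j_{m}}}\,$ whenever the gaps $j_{s}-j_{s-1}$ do not exceed $\ell(n)/m\le n/m$, so the correlation estimate of Lemma~\ref{lem:decay} (applied with $s=n+j_{m}$, exponential factor $e^{sP_{\pdelta}}$) gives an error bounded by $K\,\gamma^{r+j_{m}}\,\mu(B)$ after absorbing the bounded factor $e^{nP_{\pdelta}}/\mu(\Delta_{n})\to 1/c$; this is exactly the shape of (H.2).

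Having verified (H.1--2), Theorem~\ref{thm:mpgen} delivers convergence in law of $\tau_{n}$ to a marked Poisson point process of constant parameter measure $\lambda\pi$, where $(\lambda,\pi)$ solves~(\ref{eq:equal}). Because our $C_{m}$ is of the geometric form $c\,\theta^{m-1}$, the calculation carried out in Appendix~\ref{app:ident} applies verbatim and yields the closed form~(\ref{eq:parm}). All that remains is an arithmetic identification: substituting $\theta=(e^{-P_{\pdelta}}-1)^{-1}$ one finds $1+\theta=e^{-P_{\pdelta}}/(e^{-P_{\pdelta}}-1)$, hence $\tfrac{1}{1+\theta}=1-e^{P_{\pdelta}}$ and $\tfrac{\theta}{1+\theta}=e^{P_{\pdelta}}$. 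Plugging back into~(\ref{eq:parm}) gives $\lambda=c\,(1-e^{P_{\pdelta}})=(1-e^{P_{\pdelta}})\int h_{\pdelta}\,d\mu$ and $\pi_{j}=(1-e^{P_{\pdelta}})\,e^{(j-1)P_{\pdelta}}$, as claimed.

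The genuinely substantive work has been done in the preparatory lemmas, so the proof of the theorem itself is largely a synthesis. The main obstacle to watch for is in the verification of (H.2): one must be careful that the tail factor $e^{sP_{\pdelta}}\mu(B)$ in Lemma~\ref{lem:decay} pairs correctly with the normalisation $c_{n}$, so that the constants $K_{m}$ appearing in (H.2) remain uniform in $n$ and the geometric rate $\gamma^{r+j_{m}}$ is genuinely of the required form. Once this bookkeeping is in place, no further dynamical input is needed and the theorem follows.
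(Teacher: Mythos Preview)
Your proposal is correct and follows exactly the paper's own route: verify (H.1) via Lemma~\ref{lem:cm} combined with Lemma~\ref{lem:dels} to get $C_{m}=c\,\theta^{m-1}$, verify (H.2) via Lemma~\ref{lem:decay} after collapsing the product to $\chi_{\Delta_{n+j_{m}}}$, then invoke Theorem~\ref{thm:mpgen} and the computation~(\ref{eq:parm}). Two cosmetic points: the bound in (H.1) should read $C_{m}=(c/\theta)\,\theta^{m}$ rather than $C_{m}\le c\,\theta^{m}$ (the latter fails when $\theta<1$), and in your (H.2) sketch the relevant exponential factor from Lemma~\ref{lem:decay} is $e^{(n+j_{m})P_{\pdelta}}$, which you absorb by taking the $\gamma$ of (H.2) to be $\max(\gamma,e^{P_{\pdelta}})$ rather than by comparison with $\mu(\Delta_{n})$; but the paper itself simply asserts that Lemma~\ref{lem:decay} yields (H.2) without spelling this out, so your level of detail already exceeds the original.
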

\begin{theorem}
Under the hypotheses of Theorem~\ref{thm:mpsymbv}, the process of 
successive entrances $\tau_n^e$ to $\Delta_n$ scaled by 
$e^{-nP_{\Delta}}$ converges in law to a Poisson point process with 
parameter~$\lambda$ as above.
\label{thm:mpsymbe}
\end{theorem}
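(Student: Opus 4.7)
The plan is simply to invoke Theorem~\ref{thm:mpgene}, which converts convergence in law of the hitting time process (under Hypotheses (H.1-2)) into Poisson convergence of the corresponding entrance process with intensity equal to the first component $\lambda$ of the pair~(\ref{eq:parm}). All of the required verifications have already been carried out during the proof of Theorem~\ref{thm:mpsymbv}: Lemma~\ref{lem:dels} combined with Lemma~\ref{lem:cm} establishes (H.1) with scale $c_n = e^{nP_{\pdelta}}$, window $\ell(n) = n$, and intermediate constants
\[
C_m \;=\; c\,\theta^{m-1}, \qquad c \;=\; \int h_{\pdelta}\, d\mu, \qquad \theta \;=\; (e^{-P_{\pdelta}}-1)^{-1},
\]
while Lemma~\ref{lem:decay} establishes (H.2) in quantitative form. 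Nothing further has to be proved about the dynamics, the spectral perturbation, or the Laplace transform machinery.

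The only remaining task is a short algebraic check that the intensity produced by Theorem~\ref{thm:mpgene} via~(\ref{eq:parm}) agrees with the value stated in Theorem~\ref{thm:mpsymbv}. Substituting the constants above,
\[
\lambda \;=\; \frac{c}{1+\theta} \;=\; \frac{\int h_{\pdelta}\, d\mu}{1+(e^{-P_{\pdelta}}-1)^{-1}} \;=\; (1-e^{P_{\pdelta}})\int h_{\pdelta}\, d\mu,
\]
which is exactly the $\lambda$ of Theorem~\ref{thm:mpsymbv} and hence of the present statement. Conceptually, $\tau_n^e$ retains only the first hit of each cluster of consecutive visits to $\Delta_n$; passing to the limit strips away the geometric marks $\pi_j = (1-e^{P_{\pdelta}})e^{(j-1)P_{\pdelta}}$ of Theorem~\ref{thm:mpsymbv} while leaving the underlying Poisson skeleton of intensity $\lambda$.

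There is no substantive obstacle here, since the deep ingredients---Proposition~\ref{prop:cms} on the restricted transfer operator $\L_{\Delta}$, the decay of correlations of Lemma~\ref{lem:decay}, and the combinatorial/Laplace-transform formalism underlying Theorem~\ref{prop:fact}---are already in place. The mild point worth noting is that (H.2) must hold not just for $\mu$-typical sets but uniformly in $r$ against an arbitrary Borel set $B$ with the $e^{sP_{\pdelta}}\mu(B)$ bound on the right-hand side; this is precisely what Lemma~\ref{lem:decay} provides, so Theorem~\ref{thm:mpgene} can be applied without modification.
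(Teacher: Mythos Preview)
Your proposal is correct and follows exactly the paper's approach: the paper does not give a separate proof of Theorem~\ref{thm:mpsymbe} but simply remarks, just before stating it together with Theorem~\ref{thm:mpsymbv}, that Lemmas~\ref{lem:dels} and~\ref{lem:cm} yield~(H.1) with $C_m=c\,\theta^{m-1}$ and Lemma~\ref{lem:decay} yields~(H.2), so that Theorems~\ref{thm:mpgen} and~\ref{thm:mpgene} apply with the parameters~(\ref{eq:parm}). Your explicit algebraic verification that $c/(1+\theta)=(1-e^{P_{\pdelta}})\int h_{\pdelta}\,d\mu$ is a welcome addition that the paper leaves implicit.
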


\bigskip

{\bf An application}. The above results give an interesting application as follows.  Suppose 
we are given $N$ points at random $\omega^{(i)}\in\Sigma_{A}^{+}$, 
independently of one another, and $\omega^{(i)}$ distributed according 
to the equilibrium state $\mu_{i}$.  Record the times $k>0$ of $n$ 
matchings in a row of all the sequences $\omega^{(i)}$, i.e.~consider 
the times $k>0$ such that $\omega^{(i)}_{k+s}=\omega^{(i')}_{k+s}$ for 
$s=0,\ldots,n-1$, and all $i,i'\in\{1,\ldots,N\}$.  Then rescaling the 
corresponding point process by the probability of the event hitting 
$n$ matchings in a row over all the sequences gives, in the limit, a 
marked Poisson point process of constant parameter measure.  To 
identify the parameters one only needs to consider the product shift 
of finite type $\Sigma_{A}^{+}\times\cdots\times\Sigma_{A}^{+}$ and 
notice that $\mu_{1}\times\cdots\times\mu_{N}$ is the equilibrium 
state of the potential
\[
	\phi(x^{(1)},\ldots,x^{(N)}) \;=\; \phi_{1}(x^{(1)}) +\cdots + 
	\phi_{N}(x^{(N)}) \;,
\]
where $\phi_{i}$ is the potential defining $\mu_{i}$.  Consider the 
subshift $\Sigma_{\Delta}$ as the diagonal subshift obtained by 
setting $x^{(i)}=x^{(i')}$ for all $i,i'$.  Let $\Delta_{n}$ be the 
subset of $\Sigma_{A}^{+}\times\cdots\times\Sigma_{A}^{+}$ consisting 
of the points $(x^{(1)},\ldots,x^{(N)})$ such that 
$x^{(i)}_{s}=x^{(i')}_{s}$ for $s=0,\ldots,n-1$, and all 
$i,i'\in\{1,\ldots,N\}$.  We apply the above results to this situation 
and conclude that the parameters $\lambda$ and $\pi$ of the limiting 
point process is given by~(\ref{eq:const}), where $P_{\pdelta}$ is 
replaced by $P_{*}=P_{\phi_{\pdelta}}-P(\phi)$, $P(\phi)$ denotes the 
pressure of $\phi$ on $\Sigma_{A}^{+}\times\cdots\times\Sigma_{A}^{+}$ 
and $P_{\phi_{\pdelta}}$ is the pressure of the restriction 
$\phi_{\pdelta}$ of the potential $\phi$ to the subshift 
$\Sigma_{\Delta}$.  In the special case of two orbits we also have an 
analogue of our main result in~\cite{CC} as follows.  Define a 
distance on $\Sigma_{A}^{+}$ by
\[
	d(x,y) \;=\; \sum_{k\geq{0}}\,|x_{k}-y_{k}|\,\rho^{k} \:,
\]
for $\rho=e^{P_{*}}$.  We see that given $\epsilon>0$ the point 
process $\tau_{\epsilon}$ obtained by summing Dirac point masses at 
the times $k\geq1$ such that $d(T^{k}x,T^{k}y)\leq \epsilon$ converges in 
law, when rescaled by $\epsilon^{-1}\sim e^{-nP_{*}}$, to a marked 
Poisson point process of constant parameter measure $\lambda\pi$ as 
above.

\appendix
\section{Convergence of Factorial Moments}
\label{app:fact}
Here we reproduce the computations of~\cite{CC} adapted to our general 
setting. 
\begin{proof}[Proof of Theorem~\ref{prop:fact}]
We would like to show that 
$\nu_{k}=\lim_{n\to\infty}\,\Exp(X_{n}(g)^{k})$ exists and that 
$\psi(z)=\sum_{k\geq 0}\,\nu_{k}\, z^{k}/k!$ takes the form
\begin{equation}
	\psi(z)\;=\; {F}_g(z)\;=\; \exp\Big\{{\sum_{m=1}^{\infty} \, C_{m} 
	\int_{0}^{\infty} \big(e^{z\,g(t)}-1\big)^{m}dt}\Big\}\;,
\end{equation}
for $z$ in a small disc around the origin, where $C_{m}$ is defined 
in~(H.1).  Since the $k$-th derivative of ${F}_g$ at the origin is 
given by
\begin{equation}
\begin{split}
	&\sum_{p=1}^{k}\: \sum_{\stackrel{\scriptstyle 
	{0<t_{1},\ldots,0<t_{p}}} {\scriptstyle 
	t_{1}+\cdots+t_{p}=k}}{\frac{k!}{t_{1}!\cdots t_{p}!}} \: 
	\sum_{b=1}^{p} \: 
	\sum_{\stackrel{\scriptstyle{0<n_{1},\ldots,0<n_{b}}}{\scriptstyle 
	n_{1}+\cdots+n_{b}=p}} \: \prod_{i=1}^{b} \, 
	C_{n_{i}}\quad\times \\
	&\int_{0}^{\infty} dy_{b}\, g(y_{b})^{\sum_{s=0}^{n_{b}-1} 
	t_{s+n_{1}+\cdots+n_{b-1}+1}} \ldots \int_{0}^{y_{2}} dy_{1}\, 
	g(y_{1})^{\sum_{s=0}^{n_{1}-1}t_{s+1}} \;, \\
\end{split}\label{eq:lim}
\end{equation}
we want to prove that $\nu_{k}$ can be expressed by (\ref{eq:lim}) for 
every $k>0$.

For $k=1$ we have
\[
	\Exp( X_{n}(g) )\;=\; \Exp( \bigchi_{\Delta_n} ) 
	\sum_{s=1}^{\infty} g(s\,c_{n}) \;=\; c_{n}^{-1}\mu(\Delta_n) \;\; 
	c_{n} \sum_{s=1}^{\infty} g(s\,c_{n})\;.
\]
By~(H.1) we know that $c_{n}^{-1}\mu(\Delta_{n})$ converges to 
$C_{1}>0$.  Since $g$ is continuous with compact support we obtain
\[
	\lim_{n\to\infty}\:\Exp( X_{n}(g) ) \;=\; C_{1} 
	\int_{0}^{\infty}{g(y)}\,dy\;.
\]
Now suppose $k>1$.  We know that
\[
	\Exp( X_{n}(g)^{k} )\;= \;\sum_{0\le j_{1},\ldots,0\le j_{k}} 
	\Exp\Big( \prod_{s=1}^{k}g(j_{s}\,c_n) \, 
	\bigchi_{\Delta_n}\!{\scirc} {\S}^{j_{s}} \Big)\;,
\]
and rearranging this sum we obtain
\[
	\Exp( X_{n}(g)^{k} )\;=
\]
\[	
	\sum_{p=1}^{k}\;\; 
	\sum_{\stackrel{\scriptstyle {0<t_{1},\ldots,0<t_{p}}} 
	{\scriptstyle t_{1}+\cdots+t_{p}=k}}{\frac{k!}{t_{1}!\cdots 
	t_{p}!}}\;\; \sum_{0\le j_{1}<j_{2}<\cdots<j_{p}}\! 
	\Exp\Big(\prod_{s=1}^{p}g(j_{s}\,c_n)^{t_{s}}\, 
	\bigchi_{\Delta_n}\!{\scirc} {\S}^{j_{s}} 
	\Big)\;.
\]
For a fixed set of positive integers $t_1,\ldots,t_p$ define the 
summation
\[
	S(\{t_i\},n)\;=\;\sum_{0\le j_{1}<j_{2}<\cdots<j_{p}}\, 
	\Exp\Big(\prod_{s=1}^{p}g(j_{s}\,c_n)^{t_{s}}\, 
	\bigchi_{\Delta_n}\!{\scirc} {\S}^{j_{s}} 
	\Big)\;.
\]
Decomposing this sum into clusters of consecutive indices differing by 
at most $\ell(n)/k$ we obtain
\[
	S(\{t_i\},n)\;=\; \sum_{b=1}^{p}\;\; \sum_{\stackrel{\scriptstyle 
	{0<n_{1},\ldots,0<n_{b}}}{n_{1}+\cdots+n_{b}=p}}\;\; 
	\sum_{(j_{1},\ldots,j_{p})\in {\Q}(n_{1},\ldots,n_{b})} 
	\Exp\Big(\prod_{s=1}^{p}g(j_{s}\,c_n)^{t_{s}} 
	\,\bigchi_{\Delta_n}\!{\scirc} 
	{\S}^{j_{s}}\Big)\;,
\]
where we have defined
\[
{\Q}(n_{1},\ldots,n_{b})=
\]
\[
\Bigl\{(j_{1},\ldots,j_{n_{1}+ \cdots+n_{b}})\,|\;\,j_{1}<j_{2}<\cdots<j_{n_{1}+\cdots+n_{b}},
	 j_{q+1}-j_{q}\le \ell(n)/k
\]
\[	 
\textup{if}\;q\notin \{n_{1},n_{1}+n_{2},\ldots,n_{1}+\cdots+n_{b-1}\}
\;	\textup{and else}\; j_{q+1}-j_{q}> \ell(n)/k\Bigr\}\;. 
\]
Now we use the `relativised' decay of correlations (condition H.2) between the 
different clusters.  Fixing the positive integers $n_1,\ldots,n_b$ and 
fixing $(j_1,\ldots,j_{n_1+\cdots+n_b})\in {\Q}(n_1,\ldots,n_b)$ we 
have
\[
	\Exp\Big(\prod_{s=1}^{n_{1}+\cdots+n_{b}}\, 
	\bigchi_{\Delta_n}\!{\scirc} {\S}^{j_{s}}\Big)\,=\, 
\]
\[	
	\Exp\Big(\prod_{s=1}^{n_{1}}\,\bigchi_{\Delta_n}\!{\scirc} 
	{\S}^{j_{s}} 
	\prod_{s=n_{1}+1}^{n_{1}+n_{2}}\,\bigchi_{\Delta_n}\!{\scirc} 
	{\S}^{j_{s}} \;\;\cdots \!\!\!\!\!\!\!{
	\prod_{s=n_{1}+\cdots+n_{b-1}+1}^{n_{1}+\cdots+n_{b}}\, 
	\bigchi_{\Delta_n}\!{\scirc} {\S}^{j_{s}} \Big)\;.}
\]
Therefore we can write
\[
	\Exp\Big( \prod_{s=1}^{n_{1}+\cdots+n_{b}}\, 
	\bigchi_{\Delta_n}\!{\scirc} {\S}^{j_{s}} \Big)\;=\; 
	\Exp\Big( 
	\prod_{s=1}^{n_{1}}\, 
	\bigchi_{\Delta_n}\!{\scirc} {\S}^{j_{s}-j_{1}}
	\cdot 
	\bigchi_{B}{\scirc} {\S}^{j_{n_1+1}-j_1} \Big)\;,
\]
where $B$ is a finite union of pre-images under $T$ of $\Delta_{n}$.  
Note that $j_{n_1+1}-j_{1}>j_{n_{1}}$, therefore we can apply~(H.2) to 
get
\[
	\Exp\Big( 
	\prod_{s=1}^{n_{1}+\cdots+n_{b}}\,\bigchi_{\Delta_n}\!{\scirc} 
	{\S}^{j_{s}} \Big)\;=\; \Big( 
	\Exp\Big(\prod_{s=1}^{n_{1}}\,\bigchi_{\Delta_n}\!{\scirc} 
	{\S}^{j_{s}-j_1}\Big) + R(j_1,\ldots,j_{n_1+1}) 
	\Big)\,\Exp(\bigchi_{B})\;,
\]
where the remainder $R$ satisfies
\begin{equation}
	\big| R(j_1,\ldots,j_{n_1+1}) \big|\;\leq\; 
	K_{n_{1}}\,\gamma^{j_{n_1+1}-j_{1}}\;.
	\label{eq:rem}
\end{equation}
Now using induction on the remaining clusters we obtain for $b>1$,
\begin{equation}
\Exp\Big(\prod_{s=1}^{n_{1}+\cdots+n_{b}}\, 
\bigchi_{\Delta_n}\!{\scirc} {\S}^{j_{s}}\Big)\;= 
\label{eq:clus}
\end{equation}
\[
	\Big(\Exp\Big({\prod_{s=1}^{n_{1}}\,\bigchi_{\Delta_n}\!{\scirc} 
	{\S}^{j_{s}-j_{1}}\Big)}+R({j_{1},\ldots,j_{n_{1}+1}})\Big) 
	\;\times \cdots 
\]	
\[
	\cdots\times\;\Big(\Exp\Big({ 
	\prod_{s=n_{1}+\cdots+n_{b-2}+1}^{n_{1}+\cdots+n_{b-1}}\, 
	\!\!\!\!\!\!\!\bigchi_{\Delta_n}\!{{\scirc}} {\S}^{j_{s}} 
	\Big)}+R({j_{n_{1}+ 
	\cdots+n_{b-2}+1},\ldots,j_{n_{1}+\cdots+n_{b-1}+1}})\Big) 
\]
\[
	\times\;\Exp\Big({\prod_{s=n_{1}+\cdots+n_{b-1}+1}^{n_{1}+ 
	\cdots+n_{b}}\!\!\!\!\!\!\!\!  \bigchi_{\Delta_n}\!{\scirc} 
	{\S}^{j_{s}}\Big)}\;.
\]
The above expression implies that
\[
\begin{split}
	\Exp(X_{n}(g)^{k}) \;=\; \sum_{p=1}^{k}\;\; 
	\sum_{\stackrel{\scriptstyle {0<t_{1},\ldots,0<t_{p}}} 
	{\scriptstyle t_{1}+\cdots+t_{p}=k}}{\frac{k!}{t_{1}!\cdots 
	t_{p}!}} \sum_{b=1}^{p}\;\; \sum_{\stackrel{\scriptstyle 
	{0<n_{1},\ldots,0<n_{b}}}{n_{1}+\cdots+n_{b}=p}}\;\; 
	\sum_{(j_{1},\ldots,j_{p})\in {\Q}(n_{1},\ldots,n_{b})} \\
	\prod_{m=1}^{b}\left( 
	\Exp\Big(\prod_{s=n_{1}+\cdots+n_{m-1}+1}^{n_{1}+\cdots+n_{m}} 
	\!\!\!\!\!\!\!\bigchi_{\Delta_{n}}\!{\scirc} {\S}^{j_{s}}\Big) 
	\prod_{s=n_{1}+\cdots+n_{m-1}+1}^{n_{1}+\cdots+n_{m}} 
	g^{t_{s}}(j_{s}\,c_n) \right) \;+\; \mathcal{R}(n,k), \\
\end{split}
\]
where we have set $n_{0}=0$, and $\mathcal{R}(n,k)$ is a remainder 
term.  For fixed indices $p$, $t_{1},\ldots,t_{p}$, $b$, 
$n_{1},\ldots,n_{b}$ and for 
$(j_{1},\ldots,j_{p})\in\mathcal{Q}(n_{1},\ldots,n_{b})$, define a 
double sequence of integers $(q_{m,s})$ with $1\le m\le b$ and $0\le 
s\le n_{m}-1$ by
\[
	q_{m,s}=j_{s+n_{1}+\cdots+n_{m-1}+1} - 
	j_{n_{1}+\cdots+n_{m-1}+1}\;.
\]
We then obtain

\begin{equation}
\label{eq:big}
	\sum_{(j_{1},\ldots,j_{p})\in \mathcal{Q}(n_{1},\ldots,n_{b})}\;\; 
	\prod_{m=1}^{b}\left( 
	\Exp\Big(\prod_{s=n_{1}+\cdots+n_{m-1}+1}^{n_{1}+\cdots+n_{m}} 
	\!\!\!\!\!\!\!\bigchi_{\Delta_{n}}\!{\scirc} {\S}^{j_{s}}\Big) 
	\prod_{s=n_{1}+\cdots+n_{m-1}+1}^{n_{1}+\cdots+n_{m}} 
	g^{t_{s}}(j_{s}\,c_n)\right) \;=
\end{equation}	
\[	
	\sum_{\stackrel{\scriptstyle 
	0<q_{i,1}<\cdots<q_{i,n_{i}-1}} {
	\stackrel{\scriptstyle i=1,\ldots,b}
	{\scriptstyle q_{i,s+1}-q_{i,s}\le \ell(n)/k}}} 
	c_{n}^{-b} \prod_{i=1}^{b} \: 
	\Exp\Big(\prod_{s=0}^{n_{i}-1}\, \bigchi_{\Delta_{n}}\!{\scirc} 
	{\S}^{q_{i,s}} \Big) \quad\times
\]
\[	
	\sum_{\stackrel{\scriptstyle j_{1}<j_{n_{1}+1}<\cdots< 
	j_{n_{1}+\cdots+n_{b-1}+1}} {\stackrel{\scriptstyle 
	j_{n_{1}+\cdots+n_{r+1}+1}-j_{n_{1}+\cdots+n_{r}+1}>}{\scriptstyle q_{r,n_{r}-1}+ 
	\ell(n)/k}}}	
	c_{n}^{b} \prod_{i=1}^{b} \prod_{s=0}^{n_{i}-1} 
	g^{t_{s+n_{1}+\cdots+n_{i-1}+1}} 
	((j_{n_{1}+\cdots+n_{i-1}+1}+q_{i,s}) \,c_n) \;.
\] 
From~(H.1) and the elementary properties of the Riemann integral we see 
that the expression~(\ref{eq:big}) converges to
\[
	\prod_{i=1}^{b} \, C_{n_{i}}\int_{0}^{\infty} dy_{b}\, 
	g(y_{b})^{\sum_{s=0}^{n_{b}-1} t_{s+n_{1}+\cdots+n_{b-1}+1}} 
	\;\cdots\; \int_{0}^{y_{2}} dy_{1}\, 
	g(y_{1})^{\sum_{s=0}^{n_{1}-1}t_{s+1}} \;.
\]
Hence the proof of Theorem~\ref{prop:fact} is finished, provided 
we show that $\mathcal{R}(n,k)=o(n)$, for each fixed $k$.  

We illustrate the estimation of the remainder in the case $b=2$ for 
fixed $n_{1}$ and $n_{2}$.  (The general case can be obtained in a 
similar manner as indicated at the end of this proof.)  In this case, 
the remainder is composed by a finite sum of expressions of the form
\[
	\sum_{(j_{1},\ldots,j_{p})\in {\Q}(n_{1},n_{2})} \!\!\!
	R(j_{1},\ldots,j_{n_{1}}) \;
	\Exp\Big(\prod_{s=n_{1}+1}^{n_{1}+n_{2}} 
	\bigchi_{\Delta_{n}}\!{\scirc} {\S}^{j_{s}}\Big)
	\prod_{s=1}^{n_{1}+n_{2}} 
	g^{t_{s}}(j_{s}\,c_n) \;.
\]
Using~(\ref{eq:rem}) this term is bounded by
\begin{equation}
\label{eq:bd1}
	(\|g\|_{\infty})^{n_{1}}\, K_{n_{1}}\, \times
\end{equation}
\[	
	\sum_{(j_{1},\ldots,j_{p})\in {\Q}(n_{1},n_{2})} \!\!\!
	\gamma^{j_{n_{1}+1}-j_{n_{1}}} \;
	\Exp\Big(\prod_{s=n_{1}+1}^{n_{1}+n_{2}} 
	\bigchi_{\Delta_{n}}\!{\scirc} {\S}^{j_{s}}\Big) \left|
	\prod_{s=n_{1}+1}^{n_{1}+n_{2}} 
	g^{t_{s}}(j_{s}\,c_n) \right| \;.
\]	
Since $j_{q+1}-j_{q}\leq \ell(n)/k$ for all $q<n_{1}$, when we perform 
the sum over the indices $j_{1},\ldots,j_{n_{1}}$ we obtain a factor of
$(n_{1} \ell(n)/k)^{n_{1}}$. Using the 
fact that $j_{n_{1}+1}-j_{n_{1}}> \ell(n)/k$ and introducing the 
variables $q_{m,s}$ with $m=2$ as before, we see that~(\ref{eq:bd1}) 
is bounded by
\begin{equation}
\begin{split}
\label{eq:bd2}
	(\|g\|_{\infty})^{k}\, K_{n_{1}}\; \times
         \left( \dfrac{n_{1} \ell(n)}{k} 
	\right)^{n_{1}} \,
	\sum_{s>\ell(n)/k}\, \gamma^{s} \;\;\times \hspace{1in}\mbox{} \\
	\sum_{\stackrel{\scriptstyle 
	0<q_{2,1}<\cdots<q_{2,n_{2}-1}} {
	{\scriptstyle q_{2,s+1}-q_{2,s}\le \ell(n)/k}}} \!\!\!
	\Exp\Big(\prod_{s=0}^{n_{2}-1} 
	\bigchi_{\Delta_{n}}\!{\scirc} {\S}^{q_{2,s}}\Big) \,
	\sum_{j_{n_{1}+1}}\,
	\left|
	\prod_{s=0}^{n_{2}-1} 
	g^{t_{s+n_{1}+1}}((j_{n_{1}+1}+q_{2,s})\,c_n) \right| .
\end{split}
\end{equation}
When $n$ diverges, the second part of~(\ref{eq:bd2}) is bounded by an 
integral (multiplying and dividing by $c_{n}$), whereas the first part 
of~(\ref{eq:bd2}) is of the order
\[
 	\left( \dfrac{n_{1} \ell(n)}{k} 
	\right)^{n_{1}} \, \gamma^{\ell(n)/k} \,,
\]
which clearly tends to zero as $n$ diverges because $n_{1}$ is bounded. 
 
Now, for the general estimate of the remainder, we note that 
equation~(\ref{eq:clus}) shows that $\mathcal{R}(n,k)$ is a sum of 
products of $b$ terms of the form $\Exp\big(\prod 
\bigchi_{\Delta_{n}}\!{\scirc} {\S}^{\ \cdot}\big)$ or $R(\dots)$ and 
there is at least one of the latter type.  Introducing the indices 
$m$, $s$ and $q_{m,s}$ as before, the summation over the indices can 
be performed similarly.  Hence one obtains an expression very similar 
to equation~(\ref{eq:big}) except that one multiplies and divides by a 
power of $c_n$ which equals the number of factors of the form 
$\Exp\big(\prod \bigchi_{\Delta_{n}}\!{\scirc} {\S}^{\ \cdot}\big)$.  
Using the analogous estimates as in~(\ref{eq:rem}) for the terms 
$R({\dots})$ one readily sees that the remainder is~$o(n)$.
\end{proof}

\section{Identification of the parameters}
\label{app:ident}
Here we assume $C_{m}=c\,\theta^{m-1}$ for some $c,\theta>0$, and we 
find an explicit solution (in the constants $\lambda$ and $\pi_{j}$, 
where $\sum\,\pi_{j}=1$) of
\begin{equation}
	{\sum_{m=1}^{\infty} \, C_{m} \int\big(e^{z\, 
	g(t)}-1\big)^{m}dt}\;=\; \lambda \sum_{j=1}^{\infty} \pi_j 
	\int_{0}^{\infty} (e^{z\,j\, g(y)}-1) \: dy \;,
	\label{eq:eq2}
\end{equation}
for $z$ in some disc around the origin in $\C$.  Consider the analytic 
function $\Phi(u) = \sum_{m=1}^\infty \, C_m u^m$ defined in a 
neighbourhood of the origin.  Setting $u=(1+u)-1$ and using analytic 
continuation we obtain
\begin{equation}
\begin{split}
	\Phi(u) \;&=\; \dfrac{c}{\theta}
	\,\sum_{m=1}^{\infty} \, (\theta\,u)^{m} \;=\; 
	\dfrac{c\,u}{1-\theta\,u} 
	\;=\; \dfrac{c\,u}{(1+\theta) - \theta\,(1+u)} \\
	&=\; c\, (1-\theta')\, \big[(1+u) - 1\big] \, 
	\sum_{j=0}^{\infty} \, \big( \theta'\, 
	(1+u)\big)^{j} \:,\\
\end{split} \label{eq:psc}
\end{equation}
where we have introduced $\theta'=\theta/(1+\theta)$.
Since finding a formal 
solution for $\lambda$ and $\pi_{j}$ such that
\[
	\Phi(u)\;=\; \sum_{m=1}^\infty \, C_m u^m \;=\; 
	\lambda\Big(\sum_{j=1}^\infty \pi_j(1+u)^j -1 \Big)
\]
gives a formal solution of (\ref{eq:eq2}), we use~(\ref{eq:psc}) to 
compare the coefficients of $(1+u)^{j}$ to conclude that
\[
	\lambda \;=\; c\,(1-\theta') \;=\; \dfrac{c}{1+\theta} 
	\qquad\mbox{and}\qquad \pi_{j} \;=\; (1-\theta')\,(\theta')^{j-1} 
	\;=\; \dfrac{{\theta}^{j-1}}{(1+\theta)^{j}} \;.
\]
Hence the right-hand side of~(\ref{eq:eq2}) is an analytic function in 
a neighbourhood of the origin.

\section{Eigenfunctions for the restricted transfer operator}
\label{app:cms}
This is a review of the paper~\cite{CMS} with comments on some 
improvements of their main result, which are used in the present 
paper.  

The main difference between our setting and the one used in~\cite{CMS} 
is the fact that in~\cite{CMS} there is a fixed initial finite 
alphabet $S$ (our set of vertices $\V$) and the whole space is a 
subshift of finite type $X_{L'}$ of $X=S^{\N}$ defined by an 
irreducible and aperiodic transition matrix $L'$ in the alphabet $S$ 
(our subshift $\Sigma_{A}^{+}$).  Then the authors consider a 
subsystem of $X_{L'}$ given by a transition matrix $L$ in the alphabet 
$S$, where $L$ imposes more restrictions than $L'$ (i.e.~if 
$L=[\ell_{ij}]$ and $L'=[\ell_{ij}']$, then $\ell_{ij}'=0$ implies 
$\ell_{ij}=0$).  The important thing is that $L$ is assumed to be 
irreducible and aperiodic in the full alphabet $S$, therefore the 
allowable paths of the corresponding subshifts $X_{L}$ and $X_{L'}$ go 
through all the symbols of the initial alphabet $S$.  In our setting, 
we choose a strictly smaller alphabet $\Delta\subsetneq\V$ and 
consider the allowable paths of $\Sigma_{A}^{+}$ which go through 
vertices of $\Delta$ defining then a subshift $\Sigma_{\Delta}$ with 
alphabet $\Delta$.  If we assume now that $\Sigma_{\Delta}$ is 
irreducible and aperiodic in its alphabet $\Delta$, then there may not 
exist a strictly positive eigenfunction of the restricted transfer 
operator $\L_{\Delta}$ associated to the eigenvalue $e^{P_{\Delta}}$, 
in contrast with~\cite{CMS}, where the restricted transfer operator 
$\underline{\L}$ of $X_{L}$ is shown to have a strictly positive 
eigenfunction associated to the corresponding eigenvalue $\alpha_{L}$.  
The following provides an example.  Take the set of vertices 
$\V=\{1,2,3,4\}$ and the matrix $A$ given by
\[
	A \;=\; \left(\begin{array}{cccc}
	1 & 1 & 0 & 0 \\
	1 & 1 & 1 & 0 \\
	0 & 1 & 1 & 1 \\
	0 & 0 & 1 & 1
	\end{array}
	\right)\;.
\]
Let $\Delta=\{1,2\}$ and then $\Sigma_{\Delta}$ is the full two shift 
on the symbols $\{1,2\}$.  Any function $\psi$ defined on 
$\Sigma_{A}^{+}$ satisfies $\L_{\Delta}(\psi)(x)=0$ whenever 
$x_{0}=4$, therefore $\L_{\Delta}$ does not have a strictly positive 
eigenfunction.
In fact, in general if 
$\cZ_{\Delta}$ is the subset of $\Sigma_{A}^{+}$ given by
\begin{equation}
	\cZ_{\Delta} \;=\; \{x\in\Sigma_{A}^{+}\colon\: 
	\exists\,b\in\Delta\,, \; A(b,x_{0})=1 \} \;,
	\label{eq:cz}
\end{equation}
then for any function $\psi$, $\L_{\Delta}(\psi)(x)=0$ whenever 
$x\not\in\cZ_{\Delta}$.  Assuming $\Sigma_{\Delta}$ is irreducible and 
aperiodic in its alphabet $\Delta$, the next comments show that 
$\L_{\Delta}$ has an eigenfunction associated to $e^{P_{\Delta}}$, 
which is strictly positive on $\cZ_{\Delta}$ and it is zero on the 
complement~$\cZ_{\Delta}^{c}$.

Since we would not want to rewrite the paper~\cite{CMS}, we will only 
mention the main differences.  For 
$x\in\Sigma_{A}^{+}\setminus\Sigma_{\Delta}$, let $N(x)=\inf\{n\geq 
0\colon\: x_{n}\not\in\Delta\}$.  Fix some point 
$z\in\Sigma_{\Delta}$.  Using the fact that $A$ is irreducible and 
aperiodic there exists $q>0$ such that $A^{q}$ is a strictly positive 
matrix.  This means that for any symbol $s\in\V$ there exists an 
allowable path of length $q$ in the graph of $\Sigma_{A}^{+}$ which 
starts at $s$ and ends at $z_{0}$.  Let 
$s\to\psi_{1}(s)\to\cdots\to\psi_{q-1}(s)\to z_{0}$ be such a path, 
where $\psi_{i}(s)\in\V$, for $i=1,\ldots,q-1$.  Define 
$\pi\colon\Sigma_{A}^{+}\to\Sigma_{\Delta}$ by $\pi(x)=x$ if 
$x\in\Sigma_{\Delta}$, and for 
$x\in\Sigma_{A}^{+}\setminus\Sigma_{\Delta}$ define
\[
	\pi(x) \;=\; (x_{0},\ldots, x_{N(x)}, \psi_{1}(x_{N(x)}), \ldots, 
	\psi_{q-1}(x_{N(x)}), z_{0}, z_{1}, \ldots) \;.
\]

Let $C_{p}^{+}(\Sigma_{A}^{+})$ be the 
set of strictly positive $p$-cylindrical functions (i.e.~a function 
depending only on the first $p$ coordinates of the point).  Let 
$0<\theta<1$ be the H\"older exponent of the potential $\phi$.  Let 
$\cZ_{\Delta}$ be defined as in~(\ref{eq:cz}).  %
\begin{alemma}
There exists $c>0$ such that for any $p>0$, for any $k\geq p$, and for 
any $f\in C_{p}^{+}(\Sigma_{A}^{+})$, we have 
\[
	e^{-c\,\theta^{N(x)}} \;\leq\; 
	\dfrac{\L_{\Delta}f(x)}{\L_{\Delta}f(\pi(x))} \;\leq\; 
	e^{c\,\theta^{N(x)}} \;,
\]
for all $x\in\cZ_{\Delta}$; and $\L_{\Delta}f(x)=0$ if 
$x\not\in\cZ_{\Delta}$.
\end{alemma}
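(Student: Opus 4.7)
The vanishing claim, $\L_\Delta f(x)=0$ for $x \notin \cZ_\Delta$, is immediate from the definition of $\L_\Delta$: when $x \notin \cZ_\Delta$ no $b \in \Delta$ satisfies $A(b,x_0)=1$, so the defining sum is empty. The case $x \in \Sigma_\Delta$ is also trivial, since then $\pi(x)=x$, $N(x)=\infty$, and the bound reads $1 \leq 1 \leq 1$ under the convention $\theta^\infty=0$. All the content is therefore in the remaining case $x \in \cZ_\Delta \setminus \Sigma_\Delta$, in which $N := N(x) < \infty$.

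My plan is to establish the bound by a termwise comparison of the two sums defining $\L_\Delta^{k}f(x)$ and $\L_\Delta^{k}f(\pi(x))$ (the hypothesis ``$k \geq p$'' strongly suggests that the quantity in the conclusion is really $\L_\Delta^k$, the displayed $\L_\Delta$ most likely being a typographical vestige). Expanding the iterated operator gives
\[
\L_\Delta^k f(x) \;=\; \sum_{w \in W_k(x_0)} e^{S_k\phi(wx)}\, f(wx),
\]
where $W_k(x_0) \subset \Delta^k$ is the set of $\Delta$-words $w = b_0\cdots b_{k-1}$ that are $A$-admissible internally and satisfy $A(b_{k-1},x_0)=1$, and $S_k\phi = \sum_{i=0}^{k-1} \phi \circ \S^i$. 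Because $\pi(x)_0 = x_0$, the \emph{same} set $W_k(x_0)$ parametrises $\L_\Delta^k f(\pi(x))$, and $wx \leftrightarrow w\pi(x)$ is a natural bijection of summands.

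For each such pair I need two estimates. Agreement of the $f$-values $f(wx) = f(w\pi(x))$ is forced by cylindricity of $f$ and by the fact that $wx$ and $w\pi(x)$ share their first $k+N+1 \geq k \geq p$ coordinates. Estimating the Birkhoff sum is where H\"older regularity of $\phi$ enters: for each $i < k$ the points $\S^i(wx)$ and $\S^i(w\pi(x))$ coincide on their first $(k-i)+N+1$ symbols, giving $|\phi(\S^i(wx)) - \phi(\S^i(w\pi(x)))| \leq |\phi|_\theta\, \theta^{k-i+N+1}$. Summing a geometric series yields
\[
|S_k\phi(wx) - S_k\phi(w\pi(x))| \;\leq\; \frac{|\phi|_\theta\, \theta^{2}}{1-\theta}\; \theta^{N} \;=:\; c\,\theta^{N},
\]
with $c$ independent of $k$, $p$, $w$, $f$ and $x$.

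Finally, each term of $\L_\Delta^k f(x)$ equals the matched term of $\L_\Delta^k f(\pi(x))$ multiplied by a positive factor lying in $[e^{-c\theta^{N}}, e^{c\theta^{N}}]$; since $f > 0$ all summands are positive, so the pointwise bound passes to the ratio of the sums and yields the claimed inequality. The only delicate bookkeeping is counting exactly how many coordinates $wx$ and $w\pi(x)$ share at each shift level $i$; that count is what produces the decisive geometric factor $\theta^{N}$, and ensuring that the resulting constant $c$ does not swell with $k$ (so that a single $c$ works uniformly) is the one point where one must be careful.
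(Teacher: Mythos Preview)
Your proof is correct and is exactly the standard termwise comparison the paper has in mind; the paper does not spell out a proof at all but simply defers to Lemma~1 of~\cite{CMS}, whose argument is the one you have written down. Your diagnosis that the displayed $\L_\Delta$ should read $\L_\Delta^{k}$ is also right: without it the hypothesis ``$k\ge p$'' is idle and the $p$-cylindricity of $f$ cannot be exploited, and this reading is consistent with how the lemma feeds into Lemmas~C.2 and~C.3, both of which concern the iterates $\L_\Delta^{n}$.
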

The proof of the above Lemma is exactly the same as the proof of 
Lemma~1 of~\cite{CMS}.  For the next result, we note that if $f\in 
C_{p}^{+}(\Sigma_{\Delta})$ then $f$ can be extended in a 
natural way to a function defined on 
$\Delta_{p}=\{x\in\Sigma_{A}^{+}\colon\: 
x_{i}\in\Delta\,,\;i=0,\ldots,p-1 \}$.  %
\begin{alemma}
There exists $0<r<1$ and $c(f)>0$ such that for any $n>2p$, and for 
any $f\in C_{p}^{+}(\Sigma_{\Delta})$, we have
\[
	e^{-c(f)\,r^{n}} \;\leq\; e^{-P_{\Delta}}\,
	\dfrac{\L_{\Delta}^{n+1}f(x)}{\L_{\Delta}^{n}f(x)} \;\leq\; 
	e^{c(f)\,r^{n}} \;,
\]
for all $x\in\cZ_{\Delta}\subseteq\Sigma_{A}^{+}$.
\end{alemma}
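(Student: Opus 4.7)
The plan is to split the iterate as $\L_\Delta^{n+1}f=\L_\Delta^{m}\bigl(\L_\Delta^{n+1-m}f\bigr)$ and likewise $\L_\Delta^{n}f=\L_\Delta^{m}\bigl(\L_\Delta^{n-m}f\bigr)$, with $m=\lfloor n/2\rfloor$, so that the outer iterate $\L_\Delta^{m}$ produces preimages $y$ of $x$ lying deep inside $\Delta$ (by construction $N(y)\geq m$), while the inner iterate, when evaluated at the projection $\pi(y)\in\Sigma_\Delta$, can be controlled by the classical Ruelle--Perron--Frobenius spectral gap for the transfer operator of $\phi_\Delta$ on the irreducible aperiodic shift $\Sigma_\Delta$.

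Concretely, for $x\in\cZ_\Delta$ I would write
\[
	\L_\Delta^{n+1}f(x)\;=\;\sum_{y}\,\exp\Bigl(\textstyle\sum_{j=0}^{m-1}\phi(\S^{j}y)\Bigr)\,\L_\Delta^{n+1-m}f(y),
\]
the sum running over all $y\in\Sigma_A^{+}$ with $\S^{m}y=x$ and $y_0,\ldots,y_{m-1}\in\Delta$; the same formula holds for $\L_\Delta^{n}f(x)$ with $\L_\Delta^{n-m}f(y)$ in place of $\L_\Delta^{n+1-m}f(y)$. The hypothesis $n>2p$ ensures $n-m\geq p$, so the preceding lemma applies to both $\L_\Delta^{n+1-m}f$ and $\L_\Delta^{n-m}f$ at each such $y$, yielding uniformly
\[
	\L_\Delta^{n+1-m}f(y)=e^{O(\theta^{m})}\,\L_\Delta^{n+1-m}f(\pi(y)),\qquad \L_\Delta^{n-m}f(y)=e^{O(\theta^{m})}\,\L_\Delta^{n-m}f(\pi(y)).
\]
Since $\pi(y)\in\Sigma_\Delta$ and $\L_\Delta|_{\Sigma_\Delta}$ coincides with the Ruelle operator $\L_{\phi_\Delta}$ on the irreducible aperiodic subshift $\Sigma_\Delta$, its spectral gap produces $r_0\in(0,1)$ with
\[
	e^{-P_\Delta}\,\frac{\L_\Delta^{n+1-m}f(\pi(y))}{\L_\Delta^{n-m}f(\pi(y))}\;=\;1+O(r_0^{\,n-m})
\]
uniformly in $y$, using strict positivity of $w_\Delta$ on $\Sigma_\Delta$ together with $\int f\,d\mu_\Delta>0$.

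Combining these two estimates yields, uniformly in the preimage $y$,
\[
	\L_\Delta^{n+1-m}f(y)\;=\;e^{P_\Delta}\,\L_\Delta^{n-m}f(y)\,\bigl(1+O(\theta^{m}+r_0^{\,n-m})\bigr),
\]
and summing against the common positive weights $\exp(\sum_{j=0}^{m-1}\phi(\S^j y))$ factors this multiplicative correction cleanly out of the ratio, giving
\[
	e^{-P_\Delta}\,\frac{\L_\Delta^{n+1}f(x)}{\L_\Delta^{n}f(x)}\;=\;1+O(\theta^{m}+r_0^{\,n-m}).
\]
Taking $m=\lfloor n/2\rfloor$, setting $r=\max(\sqrt{\theta},\sqrt{r_0})\in(0,1)$, and rewriting $1+O(r^{n})$ as $e^{\pm c(f)\,r^{n}}$ yields the claimed two-sided exponential bound.

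The main technical point I expect to be delicate is extracting a genuinely pointwise multiplicative error from the spectral-gap convergence $e^{-kP_\Delta}\L_{\phi_\Delta}^{k}g\to w_\Delta\int g\,d\mu_\Delta$, rather than merely a norm bound; this uses strict positivity of $w_\Delta$ on $\Sigma_\Delta$ and $\int f\,d\mu_\Delta>0$, which is where the hypothesis $f\in C_p^{+}(\Sigma_\Delta)$ (strict positivity of $f$) really enters. A secondary point is checking that $\L_\Delta^{n}f(x)>0$ for every $x\in\cZ_\Delta$, so that the ratio is well defined: by definition of $\cZ_\Delta$ some $b\in\Delta$ satisfies $A(b,x_0)=1$, and aperiodicity of $A_\Delta$ then supplies admissible $\Delta$-paths of length $n$ ending at $b$, producing strictly positive contributions.
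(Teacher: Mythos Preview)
Your strategy is correct and is precisely the argument the paper has in mind (it simply defers to Lemma~2 of \cite{CMS} without reproducing a proof): split the iterate at $m\approx n/2$, use the preceding lemma on each preimage $y$ (which has $N(y)\geq m$) to pass to a reference point, and control the inner ratio via the Ruelle--Perron--Frobenius spectral gap for $\L_{\phi_\Delta}$ on $\Sigma_\Delta$. Your diagnosis of the two delicate points---extracting a pointwise multiplicative error from the spectral convergence via $w_\Delta>0$ and $\int f\,d\mu_\Delta>0$, and checking $\L_\Delta^{n}f>0$ on $\cZ_\Delta$---is exactly right.

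One minor wrinkle to watch: despite the declared codomain $\pi\colon\Sigma_A^+\to\Sigma_\Delta$, the map $\pi$ as written in this appendix does \emph{not} land in $\Sigma_\Delta$ when $x\notin\Sigma_\Delta$ (indeed $\pi(y)_{N(y)}=y_{N(y)}\notin\Delta$, and the connecting path $\psi_i$ uses only the aperiodicity of $A$, not of $A_\Delta$), so you cannot literally invoke the spectral gap on $\Sigma_\Delta$ at the point $\pi(y)$. The fix is immediate and does not alter your scheme: since $y_0,\dots,y_{m-1}\in\Delta$ is an $A_\Delta$-admissible word, choose any $y'\in\Sigma_\Delta$ with $y'_j=y_j$ for $j<m$; the same H\"older estimate underlying the preceding lemma gives $\L_\Delta^{n-m}f(y)=e^{O(\theta^{m})}\L_\Delta^{n-m}f(y')$ (and likewise for $n+1-m$), and at $y'\in\Sigma_\Delta$ the spectral gap applies as you intended.
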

Again the proof of the above Lemma is exactly the same as the proof of 
Lemma~2 of~\cite{CMS}.  Let $C({\cZ_{\Delta}})$ denote the 
set of continuous functions defined on $\cZ_{\Delta}$.  %
\begin{alemma}
For any $f\in\cup_{p\geq 1}\,C_{p}^{+}(\Sigma_{A}^{+})$, we 
have
\begin{itemize}
\item[(i)] $\{e^{-nP_{\Delta}}\,\L_{\Delta}^{n}f\}_{n\geq 0}$ is a 
Cauchy sequence in $C(\Sigma_{A}^{+})\:$;
\vspace{5pt}
\item[(ii)] $h_{\Delta} = \d\lim_{n\to\infty}\, 
\dfrac{e^{-nP_{\Delta}}\,\L_{\Delta}^{n}f}{\int f\,d\mu_{\Delta}}$ 
does not depend on the function $f\in\cup_{p\geq 
1}\,C_{p}^{+}(\Sigma_{A}^{+})$ and it satisfies 
\[
	\L_{\Delta}(h_{\Delta}) \;=\; e^{P_{\Delta}}\, h_{\Delta} \;.
\]

\end{itemize}
\label{lem:cms3}
\end{alemma}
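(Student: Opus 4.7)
The plan is to follow the strategy of Lemma~3 in~\cite{CMS}, modified because $\cZ_{\Delta}$ may be a proper subset of $\Sigma_{A}^{+}$ and $h_{\Delta}$ must vanish off $\cZ_{\Delta}$. Fix $f\in C_{p}^{+}(\Sigma_{A}^{+})$ and set $a_{n}(x)=e^{-nP_{\Delta}}\L_{\Delta}^{n}f(x)$. On $\cZ_{\Delta}^{c}$, the first lemma of this appendix gives $\L_{\Delta}f\equiv 0$, hence $a_{n}\equiv 0$ for every $n\geq 1$, trivially Cauchy. On $\cZ_{\Delta}$, the same lemma iterated shows $a_{n}>0$, and the second lemma yields for $n>2p$,
\[
\left|\log\frac{a_{n+1}(x)}{a_{n}(x)}\right| \;\leq\; c(f)\,r^{n}\:,
\]
with $0<r<1$ independent of $x$. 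A telescoping argument then shows $\{\log a_{n}\}$ is uniformly Cauchy on $\cZ_{\Delta}$, so $\{a_{n}\}$ converges uniformly there to a strictly positive continuous limit. Since $\cZ_{\Delta}$ is a finite union of cylinders and hence clopen, the two pieces assemble into a continuous limit on all of $\Sigma_{A}^{+}$, proving~(i).

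For part~(ii), the eigenvalue equation $\L_{\Delta}h_{\Delta}^{(f)}=e^{P_{\Delta}}h_{\Delta}^{(f)}$ for the normalised limit $h_{\Delta}^{(f)}=\lim_{n}a_{n}/\int f\,d\mu_{\Delta}$ follows by passing to the limit in the identity $\L_{\Delta}a_{n}=e^{P_{\Delta}}a_{n+1}$. To identify $h_{\Delta}^{(f)}$, I would first restrict to $\Sigma_{\Delta}$: there $\L_{\Delta}$ coincides with the standard Ruelle transfer operator of the irreducible aperiodic subsystem $(\Sigma_{\Delta},T,\phi_{\Delta})$, and the classical Ruelle--Perron--Frobenius theorem gives $a_{n}|_{\Sigma_{\Delta}}\to w_{\Delta}\int_{\Sigma_{\Delta}}f\,d\mu_{\Delta}$ uniformly. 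Hence $h_{\Delta}^{(f)}|_{\Sigma_{\Delta}}\equiv w_{\Delta}$, independently of $f$. Extending independence to all of $\cZ_{\Delta}$ then reduces to the following uniqueness claim: any continuous eigenfunction $u$ of $\L_{\Delta}$ at eigenvalue $e^{P_{\Delta}}$ that vanishes on $\Sigma_{\Delta}$ must vanish identically. Applied to $u=h_{\Delta}^{(f)}-h_{\Delta}^{(g)}$ this yields the common limit $h_{\Delta}$.

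To establish the uniqueness, fix $x_{0}\in\cZ_{\Delta}$ and use $u(x_{0})=e^{-nP_{\Delta}}\L_{\Delta}^{n}u(x_{0})$. Each preimage $y$ contributing to $\L_{\Delta}^{n}u(x_{0})$ satisfies $y_{0},\ldots,y_{n-1}\in\Delta$, so by irreducibility of $\Sigma_{\Delta}$ one finds $y'\in\Sigma_{\Delta}$ agreeing with $y$ on the first $n$ coordinates. Uniform continuity of $u$ together with $u|_{\Sigma_{\Delta}}\equiv 0$ then yields $|u(y)|\leq\varepsilon_{n}$ with $\varepsilon_{n}\to 0$; hence $|u(x_{0})|\leq\varepsilon_{n}\cdot e^{-nP_{\Delta}}\L_{\Delta}^{n}\1(x_{0})$, and the right-hand factor is uniformly bounded by part~(i) applied to $f=\1$, forcing $u(x_{0})=0$. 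The main novelty relative to~\cite{CMS} is precisely this uniqueness step: in the CMS setting the eigenfunction is strictly positive everywhere, so uniqueness follows directly from Perron--Frobenius theory, whereas here the vanishing off $\Sigma_{\Delta}$ requires the uniform-continuity argument just sketched, which depends crucially on part~(i) being in hand.
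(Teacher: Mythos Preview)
Your treatment of~(i) is correct and is exactly what the paper does: the second appendix lemma plus telescoping gives the Cauchy property on $\cZ_{\Delta}$, the sequence vanishes identically on the clopen complement $\cZ_{\Delta}^{c}$, and the two pieces glue. For~(ii) the paper offers nothing beyond ``the proof of~(ii) is the same as the proof of Lemma~3(ii) in~\cite{CMS}'', so there is little to compare against in detail. Your uniqueness lemma --- a continuous $\L_{\Delta}$-eigenfunction at $e^{P_{\Delta}}$ that vanishes on $\Sigma_{\Delta}$ must vanish everywhere, established via $|u(x_{0})|\leq \varepsilon_{n}\cdot e^{-nP_{\Delta}}\L_{\Delta}^{n}\1(x_{0})$ together with the boundedness from~(i) --- is correct and clean. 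Your assessment that this constitutes ``the main novelty relative to~\cite{CMS}'' is, however, at odds with the paper, which explicitly asserts that no modification is needed for~(ii); presumably the comparison arguments of~\cite{CMS} transfer to $\cZ_{\Delta}$ verbatim, just as they do for~(i).

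One technical caveat: the Ruelle--Perron--Frobenius theorem on $\Sigma_{\Delta}$ gives $a_{n}|_{\Sigma_{\Delta}}\to w_{\Delta}\cdot\nu_{\Delta}(f)$, where $\nu_{\Delta}$ is the \emph{eigenmeasure} of the dual of $\L_{\phi_{\pdelta}}$, not the equilibrium state $\mu_{\Delta}=w_{\Delta}\,\nu_{\Delta}$. The lemma as stated in the paper seems to carry the same slip, and it is harmless for $f=\1$ (the only case the paper actually uses downstream). But taken literally, your computation would give $h_{\Delta}^{(f)}|_{\Sigma_{\Delta}}=w_{\Delta}\cdot\nu_{\Delta}(f)/\mu_{\Delta}(f)$, which \emph{does} depend on $f$ whenever $w_{\Delta}$ is non-constant, and then the reduction to your uniqueness lemma would not go through. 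With $\nu_{\Delta}$ in place of $\mu_{\Delta}$ your argument is complete.
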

The above Lemma is the same as Lemma~3 of~\cite{CMS}.  The proof of 
Lemma~3 of~\cite{CMS} implies that 
$\{e^{-nP_{\Delta}}\,\L_{\Delta}^{n}f\}_{n\geq 0}$ is a Cauchy 
sequence in $C({\cZ_{\Delta}})$.  Since on the complement 
$\cZ_{\Delta}^{c}$ the sequence is identically zero, we conclude that 
(i) holds.  The proof of~(ii) is the same as the proof of Lemma~3~(ii) 
in~\cite{CMS}.  We note that this proof implies that $h_{\Delta}$ is 
strictly positive on~$\cZ_{\Delta}$, and it is zero on the 
complement~$\cZ_{\Delta}^{c}$.  Since the transfer operator $\L$ on 
the subsystem $\Sigma_{\Delta}$ coincides with $\L_{\Delta}$ for 
points in~$\Sigma_{\Delta}$, we conclude from~(ii) that 
$h_{\pdelta}|_{\Sigma_{\Delta}}\equiv w_{\Delta}$.

Although not explicitly mentioned in~\cite{CMS}, the function 
$h_{\pdelta}$ is a H\"older continuous function with the same H\"older 
exponent of the potential $\phi$.  This is because from~(i) and~(ii) 
we have $h_{\pdelta}=\lim_{n\to\infty}\, 
e^{-nP_{\Delta}}\,\L_{\Delta}^{n}(\1)$ in the supremum norm
$\|\cdot\|_{\infty}$ on $\Sigma_A^+$.
Hence 
$\|e^{-nP_{\Delta}}\,\L_{\Delta}^{n}(\1)\|_{\infty}$ is a 
bounded sequence.  Now, if $x,y\in\Sigma_{A}^{+}$ are such that 
$x_{i}=y_{i}$, for $i=0,\ldots,k-1$ and $k\geq 1$, then either 
$x,y\in\cZ_{\Delta}$ or $x,y\in\cZ_{\Delta}^{c}$.  Therefore we have
\[
\begin{split}
	e^{-nP_{\Delta}}\, &\big|
	\L_{\Delta}^{n}\1(x) - \L_{\Delta}^{n}\1(y) \big| 
	\; \\ %
	&\leq\; \sum_{ \stackrel{\scriptstyle (i_{0}\to\cdots\to i_{n-1}) 
	}{i_{j}\in\Delta} } e^{-nP_{\Delta}}\, e^{\phi(i_{0},\ldots, 
	i_{n-1},y)}\, \big| e^{\phi(i_{0},\ldots, 
	i_{n-1},x)-\phi(i_{0},\ldots, i_{n-1},y)} - 1 \big| \\ %
	&\leq\; \|e^{-nP_{\Delta}}\,\L_{\Delta}^{n}(\1)\|_{\infty} 
	\, \big| e^{c\,\theta^{n+k}} - 1 \big| \;\leq\; C\,\theta^{k}\;,
\end{split}
\]
where $C$ is independent of $n,k$ and $x,y$.  Hence 
$\mbox{var}_{k}(h_{\pdelta})\leq C\,\theta^{k}$ and $h_{\pdelta}$ is 
$\theta$-H\"older.

From~(ii) one can extend the convergence from 
$C_{p}^{+}(\Sigma_{A}^{+})$ to 
$C(\Sigma_{A}^{+})$, which is the same proof as 
in~\cite{CMS}.  This proves Proposition~\ref{prop:cms} as stated in 
the present paper.  For the remaining comments in 
Section~\ref{sec:sft}, we mention the corresponding changes in the 
expressions (9), (10) and (11) of the main result of~\cite{CMS} in 
our setting. Consider the Pianigiani-Yorke measure $\mu_{PY}$ defined 
by
\[
	\mu_{PY}(B) \;=\; \int_{B}\,h_{\pdelta}\,d\mu \;,
\]
for every Borel subset $B\subseteq\Sigma_{A}^{+}$.  First we note that 
for $f,g\in L^{1}(\mu)$ we have for every $n\geq 1$, 
$\L_{\Delta}^{n}(f\cdot\bigchi_{\Delta_{n}}) = \L_{\Delta}^{n}(f)$ and 
\[
	\L_{\Delta}^{n}(f\cdot g\scirc \S^{n}) \;=\; 
	\L_{\Delta}^{n}(\bigchi_{\Delta_{n}}\cdot f\cdot g\scirc \S^{n}) 
	\;=\; g\, \L_{\Delta}^{n}(f\cdot\bigchi_{\Delta_{n}}) \;=\; g \, 
	\L_{\Delta}^{n}(f) \;.
\]
On the other hand, for $n\geq 1$ we also have $ \L_{\Delta}^{n}(f\cdot 
g\scirc \S^{n}) = \L^{n}(\bigchi_{\Delta_{n}}\cdot f\cdot g\scirc 
\S^{n}) $.  Since $\mu$ is fixed by the dual operator of $\L$ we have
\begin{equation}
	\int_{\Delta_{n}} f\cdot g\scirc \S^{n}\, d\mu\;=\; \int 
	\L^{n}(\bigchi_{\Delta_{n}}\cdot f\cdot g\scirc \S^{n})\, d\mu 
	\;=\; \int g \cdot \L_{\Delta}^{n}(f) \,d\mu \;.
	\label{eq:ldelta}
\end{equation}

Let $B\subseteq\Sigma_{A}^{+}$ be a Borel subset.  Putting 
$g=\bigchi_{B}$, $f=h_{\Delta}$ in the above expression and noting 
that $\L_{\Delta}^{n}(h_{\pdelta})=e^{nP_{\pdelta}}\,h_{\Delta}$ we obtain
\[
\begin{split}
	\mu_{PY}(\S^{-n}B\cap \Delta_{n}) \;&=\; \int_{\S^{-n}B\cap 
	\Delta_{n}}\, h_{\pdelta}\,d\mu \;=\; \int_{\Delta_{n}} 
	h_{\pdelta}\cdot\bigchi_{B}\scirc \S^{n}\, d\mu \;=\\
	\int \bigchi_{B}\cdot \L_{\Delta}^{n}(h_{\pdelta}) \,d\mu %
	\;&=\; e^{nP_{\Delta}}\int \bigchi_{B}\cdot h_{\pdelta}\,d\mu \;=\; 
	e^{nP_{\Delta}}\, \mu_{PY}(B) \;.
\end{split}
\]
This proves~(\ref{eq:mupy}), since for $n=1$ we obtain
\[
	\mu_{PY}(\S^{-1}B\cap \Delta) \;=\; 
	e^{P_{\Delta}}\,\mu_{PY}(B) \;,
\]
where we identified $\Delta$ with the set $\Delta_{1}$.  Putting 
$g=\bigchi_{B}$ and $f=1$ in~(\ref{eq:ldelta}) gives
\[
	\dfrac{\mu(\S^{-n}B\cap \Delta_{n})}{\mu(\Delta_{n})} \;=
	\]
	\[
	\dfrac{\int_{\Delta_{n}} \bigchi_{B}\scirc \S^{n}\,d\mu 
	}{\int_{\Delta_{n}} d\mu } \;=\; \dfrac{\int \bigchi_{B}\cdot 
	\L_{\Delta}^{n}(\1)\, d\mu }{\int \L_{\Delta}^{n}(\1) \,d\mu } 
	\;=\; \dfrac{\int_{B} e^{-nP_{\Delta}}\,\L_{\Delta}^{n}(\1) \,d\mu 
	} {\int e^{-nP_{\Delta}}\,\L_{\Delta}^{n}(\1) \,d\mu } \;.
\]
Taking the limit when $n\to\infty$ proves~(\ref{eq:cond}), since
\[
	\lim_{n\to\infty}\, \mu(\S^{-n}B \,|\, \Delta_{n}) \;=\; 
	\dfrac{\int_{B} h_{\Delta}\,d\mu } {\int h_{\Delta}\,d\mu } \;=\; 
	\dfrac{\mu_{PY}(B)}{\mu_{PY}(\Sigma_{A}^{+}) } \;.
\]
Although $\Sigma_{\Delta}$ (which is the support of $\mu_{\Delta}$) 
has $\mu$-measure zero, an interesting fact is that 
\begin{equation}
	\mu_{\Delta}(B) \;=\; \lim_{n\to\infty} \, \mu(B\,|\,\Delta_{n}) 
	\;,
	\label{eq:db}
\end{equation}
for every closed and open subset $B\subseteq\Sigma_{A}^{+}$.  (Since 
$\mu$ and $\mu_{\pdelta}$ are ergodic measures for $\S$, they are 
mutually singular, therefore the above is untrue in general for all 
Borel sets $B$.)  Now, assume $B$ is a closed and open subset of 
$\Sigma_{A}^{+}$ and then $g=\bigchi_{B}$ is a continuous function on 
$\Sigma_{A}^{+}$.  Note that
\[
	\mu(B\,|\,\Delta_{n}) \;=\; 
	\dfrac{\mu(B\cap\Delta_{n})}{\mu(\Delta_{n})} \;=\; 
	\dfrac{\int_{\Delta_{n}} \bigchi_{B}\, d\mu}{\int_{\Delta_{n}} 
	d\mu} \;=\; \dfrac{\int 
	e^{-nP_{\Delta}}\,\L_{\Delta}^{n}(\bigchi_{B})\, 
	d\mu}{\int e^{-nP_{\Delta}}\,\L_{\Delta}^{n}(\1)\, d\mu} \;.
\]
Taking the limit when $n\to\infty$ and using an extension of 
Lemma~\ref{lem:cms3} (ii) to continuous functions, we 
obtain~(\ref{eq:db}).


\bibliographystyle{article}

\end{document}